\newtheorem{defn}{Definition}[section]
\newtheorem{theorem}{Theorem}[section]
\newtheorem{prop}{Proposition}[section]
\newtheorem{lemma}{Lemma}[section]
\newtheorem{remark}{Remark}[section]
\newtheorem{exam}{Example}[section]
\newcommand{\ml}{\mathcal}
\newcommand{\mb}{\mathbb}
\DeclareMathOperator{\lin}{lin}
\DeclareMathOperator{\nlin}{nlin}
\def\XXint#1#2#3{{\setbox0=\hbox{$#1{#2#3}{\int}$ }
		\vcenter{\hbox{$#2#3$ }}\kern-.6\wd0}}
\title{Sharp lifespan estimates for semilinear fractional evolution equations with critical nonlinearity}
\author[1]{Wenhui Chen\thanks{Wenhui Chen (wenhui.chen.math@gmail.com)}}
\affil[1]{School of Mathematics and Information Science, Guangzhou University,\authorcr 510006 Guangzhou, China}
\author[2]{Giovanni Girardi\thanks{Giovanni Girardi (g.girardi@univpm.it)}}
\affil[2]{Department of Industrial Engineering and Mathematical Sciences, \authorcr Polytechnic University of Marche,  60131 Ancona, Italy}
\date{}
\begin{document}
		\maketitle

		\begin{abstract}
			\medskip
In this paper we consider semilinear wave equation and other second order $\sigma$-evolution equations with different (effective or non-effective) damping mechanisms driven by fractional Laplace operators; in particular, the nonlinear term is the product of a power nonlinearity $|u|^p$ with the critical exponent $p=p_{\mathrm{c}}(n)$ and a modulus of continuity $\mu(|u|)$. We derive a critical condition on the nonlinearity by proving a global in time existence result under the Dini condition on $\mu$ and a blow-up result when $\mu$ does not satisfy the Dini condition. Especially, in this latter case we determine new sharp estimates for the lifespan of local solutions, obtaining coincident upper and lower bounds of the lifespan. In particular, we derive a new sharp estimate for the wave equation with structural damping and classical power nonlinearity $|u|^p$ in the critical case $p=p_c(n)$, not yet determined in previous literature. The proof of the blow-up results and the upper bound estimates of the lifespan require the introduction of new test functions which allows to overcome some new difficulties due to the presence of both non-local differential operators and general nonlinearities. 
			\\
			
			\noindent\textbf{Keywords:} semilinear evolution equation, fractional Laplace operator,  modulus of continuity, critical regularity of nonlinearity, Dini condition, lifespan estimate \\
			
			\noindent\textbf{AMS Classification (2020)}  35L15, 35G20, 35R11, 35B33, 35A01, 35B44
		\end{abstract}
\fontsize{12}{15}
\selectfont
%


\section{Introduction}\label{Section_Introduction}
\hspace{5mm}In the present paper we consider the Cauchy problem for the following nonlinear evolution equation:
\begin{equation}
	\label{eq:CPgeneral}
	u_{tt}+(-\Delta)^{\sigma}u+(-\Delta)^{\delta}u_t=|u|^p\mu(|u|), \quad x\in\mb{R}^n,\ t\geqslant 0,\\
\end{equation}
with $\sigma\geqslant 1$, $\delta\in[0,\sigma]$, $p>1$ and $\mu$ a modulus of continuity. Proceeding in the same direction of previous works (see, for instance, \cite{D'Abbicco-Girardi=2023,Ebert-Girardi-Reissig=2020,Girardi=2024}) our aim is to improve the known results on the existence of local or global in time solutions to the Cauchy problem associated to \eqref{eq:CPgeneral} depending on whether $\mu$ satisfies the Dini condition or not (see later Definition \ref{Defn-Dini}), even in the case of fractional values of $\sigma$ and $\delta$; in particular, we will provide the sharp lifespan estimates for the Cauchy problem \eqref{eq:CPgeneral} which depend on the choice of $p>1$ and $\mu=\mu(s)$ (see later Theorems \ref{Thm-Lower-Bound} and \ref{Thm-Upper-Bound}). Before stating additional details about the main purposes of this manuscript, in the following we provide a brief review about existing results for model \eqref{eq:CPgeneral}, first with the classical power nonlinearity $|u|^p$, and then with the general nonlinearity $|u|^p\mu(|u|)$; in particular, we already provide a new sharp lifespan estimate for the model \eqref{eq:CPgeneral} in the case $\mu\equiv 1$. Throughout all the paper, we use the following notations.
\subsection{Notation}
\hspace{5mm}The constants $c$ and $C$ may be different from line to line but are always independent of the parameter $\varepsilon$. We write $f\lesssim h$ if there exists a positive constant $C$ such that $f\leqslant Ch$; analogously for $f\gtrsim h$. The sharp relation $f\approx h$ holds if, and only if, $h\lesssim f\lesssim h$. For any $a\in\mb{R}$ we consider its floor $[a]:=\max\{\tilde{a}\in\mb{Z}:\tilde{a}\leqslant a \}$ and its fractional part $\{a\}:=a-[a]$. The symbol $\langle x\rangle^2:=1+|x|^2 $ denotes the Japanese bracket.  For any $s\in\mb{R}$ and $m\in[1,+\infty)$ we consider the Bessel potential space $H^s_m=H^s_m(\mb{R}^n)$. The fractional differential operator $|D|^{s}$ is defined by its action $|D|^sf:=\mathcal{F}^{-1}(|\xi|^s\widehat{f}\,)$  where $ \mathcal{F}$ is the Fourier transform with respect to the spatial variable,  and in turn $\ml{F}^{-1}$ denotes its inverse so that $\widehat{f}=\mathcal{F}(f)$; moreover,  $(-\Delta)^{s}:H^{2s}\to L^2$ is defined by $(-\Delta)^{s}h:=\ml{F}^{-1}(|\xi|^{2s}\widehat{h})$. Especially, if $s\in\mb{R}_+\backslash\mb{N}_+$ the non-local operator $(-\Delta)^{s}$ can be also defined via the integral representation (cf. \cite[Section 3]{Di-Pal-Vald=2012})  
\begin{align}\label{Frac-Defn}
	(-\Delta)^{s}h(x):=(-1)^{[s]+1}C_{2s}\int_{\mb{R}^n}\frac{(\tau_{y/2}-\tau_{-y/2})^{2[s]+2}h(x)}{|y|^{n+2s}}\,\mathrm{d}y
\end{align}
(here, $[\cdot]$ denotes the floor function) for any $h\in H^{2s}$, with the translation operator $\tau_yh(x):=h(x+y)$ and the positive constant
\begin{align*}
	C_{2s}:=2^{-2[s]-2+2s}\left(\int_{\mb{R}^n}\frac{(\sin y_1)^{2[s]+2}}{|y|^{n+2s}}\,\mathrm{d}y\right)^{-1} \ \ \mbox{where}\ \ y=(y_1,\dots,y_n).
\end{align*}
For any $p\in [1,+\infty]$ the $h(x)$-weighted space $L^p(\mb{R}^n;h(x)\,\mathrm{d}x)$ corresponds to the  space $L^p=L^p(\mb{R}^n)$ with the weighted measure $h(x)\,\mathrm{d}x$.
For any $k\in\mathbb{N}_+$ we denote by $\ln^{[k]}$ and $\exp^{[k]}$ the $k$-times iterated logarithmic and exponential functions, respectively,
\begin{align*}
	& \ln^{[k]} := \begin{cases}
		\ln & \mbox{if} \ \ k=1, \\
		\ln \circ \ln^{[k-1]}  & \mbox{if} \ \ k\geqslant 2, 
	\end{cases} 
	\ \ \mbox{and}\ \  \exp^{[k]} := \begin{cases}
		\exp & \mbox{if} \ \ k=1, \\
		\exp \circ \exp^{[k-1]}  & \mbox{if} \ \ k\geqslant 2.
	\end{cases}
\end{align*}
Finally, given $\sigma\geqslant 1$ and $\delta \in [0,\sigma]$ we denote by $s_0$ and $q_0$ the following quantities, which play a fundamental role in the proof of non-existence results:
\begin{align}
s_0:=\begin{cases}
\min\{\sigma-[\sigma],\delta-\delta\}&\mbox{if}\ \ \sigma,\delta\not\in \mb{N}_0,\\
\sigma-[\sigma]&\mbox{if}\ \ \sigma\not\in\mb{N}_0,\,\delta\in\mb{N}_0,\\
\delta-[\delta]&\mbox{if}\ \ \sigma\in\mb{N}_0,\,\delta\not\in\mb{N}_0,\\
\epsilon_0&\mbox{if}\ \ \sigma,\delta\in\mb{N}_0, \end{cases}\qquad
q_0:=n+2s_0\label{eq:s0}
\end{align}
where $\epsilon_0>0$ is a fixed arbitrary constant.
\subsection{Power-type nonlinearity in semilinear evolution equations}
\hspace{5mm}In recent years, the following Cauchy problem for the semilinear evolution equations with different damping mechanisms and a power-type nonlinearity has been deeply studied:
\begin{align}\label{Sigma-Evolution-Power}
	\begin{cases}
		u_{tt}+(-\Delta)^{\sigma}u+(-\Delta)^{\delta}u_t=|u|^{p},&x\in\mb{R}^n,\ t>0,\\
		u(0,x)=\varepsilon u_0(x),\ u_t(0,x)=\varepsilon u_1(x),&x\in\mb{R}^n,
	\end{cases}
\end{align}
with $\sigma\geqslant 1$, $\delta\in[0,\sigma]$ and $p>1$, where $\varepsilon>0$ is a parameter describing the size of the Cauchy data. In particular, $\delta=0$ in the model corresponds to a friction or an external damping, whereas the cases $\delta\in(0,\sigma/2]$ and $\delta\in(\sigma/2,\sigma]$ are referred to as the structural effective damping and,  respectively,  the structural non-effective damping. 

It is known that, assuming $L^1$ regularity for the initial data, the critical exponent for the semilinear Cauchy problem \eqref{Sigma-Evolution-Power} is given by 
\begin{align}\label{Critical-Exponent}
	p_{\mathrm{c}}(n):=1+\frac{2\sigma}{n-\min\{2\delta,\sigma\}}, \ \ \mbox{when}\ \ n>\min\{2\delta,\sigma\};
\end{align}
in the space dimension $n\leqslant \min\{2\delta,\sigma\}$ it is customary to set $p_{\mathrm{c}}(n):=+\infty$, since the semilinear Cauchy problem \eqref{Sigma-Evolution-Power} does not admit any global in time weak solution for all $p>1$. Here, by \textit{critical exponent} we mean a threshold value in the range of power nonlinearities $|u|^p$ such that for  $1<p<p_{\mathrm{c}}(n)$, i.e. in the sub-critical case, local in time solutions blow up in finite time under suitable sign assumptions for the Cauchy data, regardless of their sizes; whereas for $p>p_{\mathrm{c}}(n)$, i.e. in the super-critical case, there exist solutions which are globally in time defined in suitable function spaces provided that the Cauchy data are sufficiently small; the critical case $p=p_{\mathrm{c}}(n)$ belongs to the blow-up range as well (note that the critical exponent is shifted if the initial data are not in $L^1$, but in $L^m$ for some $m>1$; see,  for example, \cite{Chen-D'Abbicco-Girardi=2022, Ikeda-Inui-Okamoto-Wakasugi=2019}).

Under different conditions on $\sigma$ and $\delta$ the critical exponent \eqref{Critical-Exponent} is justified by the results in \cite{D'Abbicco-Ebert=2014, D'Abbicco-Ebert=2017, D'Abbicco-Ebert=2022, D'Abbicco-Fujiwara=2021,  D'Abbicco-Reissig=2014, Dao-Reissig=2021-01, Pham-K-Reissig=2015} and references given therein. 
We underline that in the non-effective case $\delta\in(\sigma/2,\sigma]$ the formula \eqref{Critical-Exponent} determines the critical exponent if $\sigma>1$ only; indeed, determining the critical exponent for $\sigma=1$ and, simultaneously, $\delta\in(1/2,1]$ is still an open problem, although sharp $L^p-L^q$ estimates in this case have been derived in \cite{D'Abbicco-Ebert=2025} already (see also \cite{D'Abbicco-Lagioia=2025} where the authors obtained decay estimates for the wave equation with a very strong damping, namely in the case  $\sigma=1$ and $\delta >1$).

Since no global in time solutions to \eqref{Sigma-Evolution-Power} exist for every $1<p\leqslant p_{\mathrm{c}}(n)$ and $\varepsilon>0$, it is interesting to investigate lifespan estimates for the local solutions.  Let us recall that, fixed a parameter $\varepsilon\in (0,1)$ which determines the size of initial data, the lifespan $T_\varepsilon$ of a given Cauchy problem is defined as the maximal existence time in which the Cauchy problem admits a solution, depending on the value of $\varepsilon$. In particular, we define $T_{\varepsilon}=+\infty$ if a (small data) global in time solution exists. 

%
In the special case $(\sigma,\delta)=(1,0)$, that corresponds to the semilinear classical damped wave equation (see   \cite{Fujiwara-Georgiev=2023,Fujiwara-Georgiev=2024, Fujiwara-Ikeda-Wakasugi=2019, Ikeda-Ogawa=2016, Ikeda-Sobajima=2019,Kirane-Qafsaoui=2002, Lai-Zhou=2019, Li-Zhou=1995}), the lifespan  is sharply estimated by 
\begin{align}\label{Sharp-Damped-Waves}
	T_{\varepsilon}\approx\begin{cases}
		\varepsilon^{-\frac{2(p-1)}{n(p_{\mathrm{F}}(n)-p)}}&\mbox{if}\ \ 1<p<p_{\mathrm{F}}(n),\\
		\exp\left(C\varepsilon^{-(p_{\mathrm{F}}(n)-1)}\right)&\mbox{if}\ \ p=p_{\mathrm{F}}(n),
	\end{cases}
\end{align}
and $T_{\varepsilon}=+\infty$ if $p>p_{\mathrm{F}}(n)$.  In \eqref{Sharp-Damped-Waves} by $p_{\mathrm{F}}(n)$ we denote the Fujita exponent $p_{\mathrm{F}}(n):=1+2/n$; it was introduced in \cite{Fujita=1966} for the semilinear heat equation, and coincides with the critical exponent for the semilinear damped wave equation $u_{tt}-\Delta u+u_t=|u|^p$ due to the well-known diffusion phenomenon (we refer the interested reader to \cite{Ikeh-Tani-2005, Li-Zhou=1995,Todorova-Yordanov=2001,Zhang=2001} and references therein for detailed explanation). 
We stress that the lifespan estimate \eqref{Sharp-Damped-Waves} is sharp in the sense that the upper and lower bounds of the lifespan $T_\varepsilon$ are exactly coincident except for some multiplicative constants independent of $\varepsilon$ (the notation $\approx$ is always used for denoting a sharp estimate).

For $\delta\leqslant\sigma\neq 1$ both integers, the authors in \cite[Theorem 1]{D'Abbicco-Ebert=2017} derived an upper bound estimate of the lifespan only in the sub-critical case; 
more precisely, when $\sigma\in\mb{N}_+$ and $\delta\leqslant\sigma$ belongs to $\mb{N}_0$, assuming $u_0=0$ and a suitable sign assumption on $u_1\in L^1$, they obtained
\begin{align}\label{Upper-Bound-D'Abb-Ebert-17}
	T_{\varepsilon}\lesssim\varepsilon^{-\frac{\kappa}{2\sigma p'-(n+\kappa)}}= \varepsilon^{-\frac{\kappa(p-1)}{(n-\min\{2\delta,\sigma\})(p_{\mathrm{c}}(n)-p)}}\ \ \mbox{if}\ \ 1<p<p_{\mathrm{c}}(n),
\end{align}
for every $\varepsilon\in(0,1)$. Here, $p'=p/(p-1)$ is the H\"older's conjugate of $p$ and, for the sake of simplicity, $\kappa$ is defined by
\begin{align*}
	\kappa:=2\sigma-\min\{2\delta,\sigma\}.
\end{align*}
To the best of our knowledge, the sharpness of estimate \eqref{Upper-Bound-D'Abb-Ebert-17} has not been proved in the literature; moreover, (sharp) lifespan estimates in the critical case $p=p_{\mathrm{c}}(n)$ are in general unknown except in the special case $(\sigma, \delta)=(1,0)$.
Following the same approach used in the proof of our main results Theorem \ref{Thm-Lower-Bound} and Theorem \ref{Thm-Upper-Bound}, one can prove that the estimate \eqref{Upper-Bound-D'Abb-Ebert-17} is sharp; particularly, in the space dimension $n>\min\{2\delta, \sigma\}$ it holds true for every $1<p< p_{\mathrm{c}}(n)$ even for not integer values of both $\sigma\neq 1$ and $\delta\leqslant\sigma$. Furthermore, a new sharp lifespan estimate in the critical case $p=p_{\mathrm{c}}(n)$ can be derived. More in detail, the following proposition can be proved.

\begin{prop}\label{Coro-Classical-sigma-evolution}
	Let $\sigma\geqslant 1$, $\delta\in[0,\sigma]$ and $q_0$ defined by \eqref{eq:s0}; we assume that the Cauchy data $u_0$ and $u_1$ satisfy
	\begin{align*}
		u_0,\, u_1\in L^1(\mb R^n; \langle x\rangle^{-q_0}\,\mathrm{d}x) \quad \text{and} \quad \int_{\mb R^n} \big(u_0(x)+u_1(x)\big)\langle x\rangle^{-q_0}\,\mathrm{d}x >0,
	\end{align*}
	if $\delta=0$; whereas if $\delta>0$ we suppose that $u_1$ belongs to $L^1(\mb R^n; \langle x\rangle^{-q_0}\,\mathrm{d}x)$, and additionally one of the following conditions holds:
	\begin{subequations}
		\begin{align*}
			u_0\in L^1(\mb R^n) \quad &\text{and} \quad  \int_{\mb R^n}u_1(x)\langle x\rangle^{-q_0}\,\mathrm{d}x >0,\\
			u_0\in L^1(\mb R^n; \langle x\rangle^{-q_0}\,\mathrm{d}x) \quad &\text{and} \quad \exists \bar{c}>0 : u_1(x)\geqslant 2\bar{c}\, |u_0(x)| \quad \forall x\in \mb R^n.
		\end{align*}
	\end{subequations}
	Then, the lifespan $T_{\varepsilon}$ for the semilinear evolution equations \eqref{Sigma-Evolution-Power} with power nonlinearity $|u|^p$ satisfies the following sharp estimates:
	\begin{itemize}
		\item in the sub-critical case $1<p<p_{\mathrm{c}}(n)$,
		\begin{align}\label{Lifespan-Example-Sub-Critical}
			T_{\varepsilon}\approx \varepsilon^{-\frac{\kappa(p-1)}{(n-\min\{2\delta,\sigma\})(p_{\mathrm{c}}(n)-p)}};
		\end{align}
		\item in the critical case $p=p_{\mathrm{c}}(n)$,
		\begin{align}\label{Lifespan-Example-Critical}
			T_{\varepsilon}\approx \exp\left(C\varepsilon^{-\frac{2\sigma}{n-\min\{2\delta,\sigma\}}}\right).
		\end{align}
	\end{itemize}
\end{prop}
\begin{proof}
	The proof of the desired lifespan estimates \eqref{Lifespan-Example-Sub-Critical} and \eqref{Lifespan-Example-Critical} follows by straightforward calculations as those in the proof of our main results Theorem \ref{Thm-Lower-Bound} and Theorem \ref{Thm-Upper-Bound}, fixing $\mu(\tau)= \tau^{p-p_{\mathrm{c}}(n)}$ in the sub-critical case $1<p<p_{\mathrm{c}}(n)$ and $\mu\equiv 1$ in the critical case $p=p_{\mathrm{c}}(n)$.
	In particular, the estimates  \eqref{LLLL-01} and \eqref{Contradiction-01} can be proved, with
	\begin{align*}
		\ml{H}(\tau)=\frac{1}{p_{\mathrm{c}}(n)-p}\left(\tau^{p-p_{\mathrm{c}}(n)}-\tau^{p-p_{\mathrm{c}}(n)}_0\right)
	\end{align*}
	in the sub-critical case $1<p<p_{\mathrm{c}}(n)$ and $\ml{H}(\tau)=\ln \tau_0-\ln\tau$  in the critical case $p=p_{\mathrm{c}}(n)$; then, the proof follows the strategy explained in Sections \ref{Subsection-Philosophy} and \ref{Section-Blow-up_1}.
\end{proof}
\begin{remark}
	The estimate \eqref{Lifespan-Example-Sub-Critical} with $(\sigma,\delta)=(1,0)$ exactly coincides with the sharp lifespan estimate \eqref{Sharp-Damped-Waves} for the semilinear damped wave equation $u_{tt}-\Delta u+u_t=|u|^p$ in the sub-critical Fujita case $1<p<p_{\mathrm{F}}(n)$; in the general situation $\delta\in[0,\sigma]$, the estimate \eqref{Lifespan-Example-Sub-Critical} verifies the sharpness of the upper bound estimate \eqref{Upper-Bound-D'Abb-Ebert-17} derived in \cite[Theorem 1]{D'Abbicco-Ebert=2017}. On the other hand, in the critical case $p=p_{\mathrm{c}}(n)$, the estimate \eqref{Lifespan-Example-Critical} with $(\sigma,\delta)=(1,0)$ exactly coincides with the sharp lifespan estimate \eqref{Sharp-Damped-Waves} for the critical semilinear damped wave equation $u_{tt}-\Delta u+u_t=|u|^{p_{\mathrm{F}}(n)}$; in the general case $\delta\in[0,\sigma]$, \eqref{Lifespan-Example-Critical} is a new sharp estimate of the lifespan.
\end{remark}

\subsection{Modulus of continuity in critical semilinear evolution equations}
\hspace{5mm}We can rephrase the critical nature of the exponent \eqref{Critical-Exponent} by saying that $p=p_{\mathrm{c}}(n)$ is the threshold value that separates the blow-up range from the small data solutions' global existence range in the scale of power-type nonlinearities $\{|u|^p:p>1\}$. Following the philosophy of \cite{Ebert-Girardi-Reissig=2020} one may investigate how blow-up results and lifespan estimates are affected if one considers a more refined scale of nonlinearities, in the form $\{|u|^{p_{\mathrm{c}}(n)}\mu(|u|):\mu\mbox{ is a modulus of continuity}\}$; we will show that the Dini condition on $\mu$ (see Definition \ref{Defn-Dini}) will play a crucial role in this study.

Let us recall the notion of Dini condition introduced in the classical work by Ulisse Dini \cite[Page 101]{Dini=1880} (see also \cite{Zygmund=1988}) to investigate the convergence of some Fourier series, and later applied in different areas of mathematical analysis, e.g. elliptic equations \cite{Hartman-Wintner=1955}, dynamical systems \cite{Fan-Jiang=2001}, harmonic analysis \cite{Lerner-Omb-Riv=2017}, complex analysis \cite{BADGER}, probability theory \cite{Bass=1988}, etc. 

\begin{defn}\label{Defn-Dini}
	Let $\mu:[0,+\infty)\to[0,+\infty)$ be a modulus of continuity, that is, $\mu$ is a continuous and increasing function such that $\mu(0)=0$. Then, $\mu$ satisfies the Dini condition if
	\begin{align}\label{Dini-Condition}
		\int_0^{\tau_0}\frac{\mu(\tau)}{\tau}\,\mathrm{d}\tau<+\infty,\ \ \mbox{for some}\ \ \tau_0>0. 
	\end{align}
	On the other hand, $\mu$ satisfies the non-Dini condition if \eqref{Dini-Condition} does not hold, i.e., if
	\begin{align}\label{not-Dini-Condition}
		\int_0^{\tau_0}\frac{\mu(\tau)}{\tau}\,\mathrm{d}\tau=+\infty,\ \ \mbox{for every}\ \ \tau_0>0.
	\end{align}
\end{defn}
\begin{remark}
	If the integral condition \eqref{Dini-Condition} is satisfied for some $\tau_0>0$, then it also holds true for any $\tau_0>0$ due to the local summability of 
	$\mu(\tau)/\tau$ in any compact set of $\mb{R}_+$.
\end{remark}

In 2020, the authors of \cite{Ebert-Girardi-Reissig=2020} studied the following semilinear damped wave equation with a modulus of continuity $\mu=\mu(s)$ in the nonlinearity:
\begin{align}\label{Damped-Wave-Modulus}
	\begin{cases}
		u_{tt}-\Delta u+u_t=|u|^{p_{\mathrm{F}}(n)}\mu(|u|),&x\in\mb{R}^n,\ t>0,\\
		u(0,x)=\varepsilon u_0(x),\ u_t(0,x)=\varepsilon u_1(x),&x\in\mb{R}^n;
	\end{cases}
\end{align}
it is the first paper in which the problem of determining a critical condition on nonlinearities in the form $|u|^p\mu(|u|)$ is considered, with the aim of refining the well-known results about the existence of global in time solutions to the semilinear damped wave equation in $\mb R^n$. The authors proved that the Dini condition \eqref{Dini-Condition} allows to identify the desired critical condition on $\mu$, enabling to distinguish more precisely the region of existence of a global in time small data solution from that in which the problem admits no global in time weak solutions; especially, in the space dimension $n=1,2$ they applied Matsumura's estimates and a fixed point argument to prove that the Dini condition \eqref{Dini-Condition} on $\mu$ is sufficient to guarantee the global in time existence of small data solutions to \eqref{Damped-Wave-Modulus}; on the other hand, for any $n\geqslant 1$ they applied a modified test function method, introduced in \cite{Ikeda-Sobajima=2019}, to prove that \eqref{Dini-Condition} is also a necessary condition for the existence of global in time weak solutions to \eqref{Damped-Wave-Modulus}. From these results  one can expect that the classical scale $\{|u|^p:p>1\}$ is in general too rough to determine the critical nonlinearity for semilinear evolution models.
Moreover, if $\mu$ does not satisfy the Dini condition, it is interesting to determine sharp lifespan estimates (from above and below), which are currently unknown for the problem \eqref{Damped-Wave-Modulus}. In the present manuscript we will answer to this question, providing the sharp lifespan estimates \eqref{Sharp-Lifespan-Est} with $(\sigma,\delta)=(1,0)$.

Soon after, the same problem was investigated in  \cite{Dao-Reissig=2021} for a weakly coupled system of semilinear damped wave equations, with nonlinearities in the form $|s|^p \mu(|s|)$; for the same problem, an example of upper bound estimate of the lifespan was proposed in \cite[Remark 9]{Chen-Dao=2023}, by fixing  $\mu(s)=(\ln(-s))^{-\gamma}$ with $\gamma$ properly chosen. To the best of our knowledge, there are no results on critical nonlinearities in this form for weakly coupled systems of structurally damped wave equations, for which a new sharp condition for the existence of global in time small data solutions have been obtained in \cite{D'Abbicco-LagioiaNODEA=2025} in the non-effective damped case with power nonlinearity. In \cite{M-Djaouti-Reissig=2023} the authors obtained the critical condition \eqref{Dini-Condition}, in the 1-dimensional case, for semilinear wave equations with either a time-dependent effective or a scale-invariant damping term. In \cite{Girardi=2024} the author showed that the results obtained in \cite{Ebert-Girardi-Reissig=2020} for the problem \eqref{Damped-Wave-Modulus} can be generalized to more general semilinear evolution models with the Fujita-type critical exponent; in particular, the critical behavior of nonlinearity is still described by the Dini condition \eqref{Dini-Condition}. 

Quite recently, the critical regularity of nonlinearities for semilinear classical wave equations (without any linear damping mechanisms) was studied in \cite{Chen-Reissig=2023,Wang-Zhang=2024} for the nonlinearity $|u|^{p_{\mathrm{S}}(n)}\mu(|u|)$, and in \cite{Chen-Palmieri=2024} for the nonlinearity $|u_t|^{p_{\mathrm{G}}(n)}\mu(|u_t|)$; here, $p_{\mathrm{S}}(n)$ and $p_{\mathrm{G}}(n)$ stand for the Strauss exponent, and respectively, the Glassey exponent, which are the critical exponents for the semilinear classical wave equations with the power-type nonlinearity $|u|^p$, and respectively, with the derivative-type nonlinearity $|u_t|^p$. In \cite{Chen-Palmieri=2024} sharp lifespan estimates (at least in the radially symmetric and 3-dimensional case) for the semilinear wave model $u_{tt}-\Delta u=|u_t|^{p_{\mathrm{G}}(n)}\mu(|u_t|)$ were derived.

Let us go back to our semilinear evolution model \eqref{eq:CPgeneral}; strongly motivated by \cite{Ebert-Girardi-Reissig=2020} the authors in \cite{D'Abbicco-Girardi=2023,Girardi=2024} considered the refined scale of nonlinear terms $\{|u|^{p_{\mathrm{c}}(n)}\mu(|u|):\mu\mbox{ is a modulus of continuity}\}$ in the following semilinear Cauchy problem:
\begin{align}\label{Sigma-Evolution-Modulus}
	\begin{cases}
		u_{tt}+(-\Delta)^{\sigma}u+(-\Delta)^{\delta}u_t=|u|^{p_{\mathrm{c}}(n)}\mu(|u|),&x\in\mb{R}^n,\ t>0,\\
		u(0,x)=\varepsilon u_0(x),\ u_t(0,x)=\varepsilon u_1(x),&x\in\mb{R}^n,
	\end{cases}
\end{align}
with $p_{\mathrm{c}}(n)$ defined by \eqref{Critical-Exponent}.
Assuming the Dini condition \eqref{Dini-Condition} on $\mu$, in the non-effective case $\delta\in(\sigma/2,\sigma]$ with $\max\{1,n/2\}<\sigma<n$, the authors of \cite{D'Abbicco-Girardi=2023} proved the existence of a global in time solution to \eqref{Sigma-Evolution-Modulus} in the classical Sobolev space $H^{\sigma}$, taking small initial data in $L^2\cap L^1$.
Moreover, for any $\sigma\geqslant 1$, in the limit case $\delta=\sigma/2$ they proved the existence of a global in time solution to \eqref{Sigma-Evolution-Modulus} in $H^{\sigma}\cap L^1\cap L^{\infty}$, assuming small initial data in $L^1\cap L^{\frac{n}{\sigma}}$; as a consequence, the Dini condition \eqref{Dini-Condition} on $\mu$ turns out to be again sufficient to guarantee the existence of global in time small data solutions to \eqref{Sigma-Evolution-Modulus} in the non-effective case $\delta \in [\sigma/2, \sigma]$; the counterpart of non-existence under the non-Dini condition \eqref{not-Dini-Condition} was proved in the recent manuscript \cite{Girardi=2024} for $\delta$ and $\sigma$ both integers. Still in \cite{Girardi=2024}, among other more general differential equations, the author considered the problem \eqref{Sigma-Evolution-Modulus} in the effective case  $\delta\in[0,\sigma/2)$ with $\sigma\in\mb{N}$ and  $\delta\in\mb{N}_0$;  under suitable sign assumptions on the initial data, if $\mu$ satisfies the non-Dini condition \eqref{not-Dini-Condition} then every weak solution blows up in finite time; on the other hand, in the space dimension $n\in(2\delta,2\sigma)$, if $\mu$ satisfies the Dini condition \eqref{Dini-Condition}, then any local in time solution to \eqref{Sigma-Evolution-Modulus} in $L^{p_{\mathrm{c}}(n)}\cap L^{\infty}$ can be globally extended, provided that the initial data are sufficiently small in $(H^1\cap L^1)\times (L^2\cap L^1)$. 

In the fractional case $\sigma,\delta\in\mb{R}_+\backslash\mb{N}_+$, some technical difficulties arise due to the non-local behavior of the differential operators $(-\Delta)^\sigma$ and $(-\Delta)^\delta$; as a consequence, the test function method employed in \cite{Girardi=2024} (or in the previous papers \cite{Ebert-Girardi-Reissig=2020, Ikeda-Sobajima=2019}) does not work anymore.

\subsection{Main purposes of this manuscript}
\hspace{5mm}If $\mu$ satisfies the non-Dini condition \eqref{not-Dini-Condition} the above-mentioned results for the semilinear evolution models \eqref{Damped-Wave-Modulus} and \eqref{Sigma-Evolution-Modulus} show that every non-trivial local in time weak solution blows up in finite time, provided that the Cauchy data satisfy suitable sign assumptions (at least when all the powers of Laplace operators are integers). For this reason, in the present paper we will discuss the following natural and important question:
\begin{center}
	\emph{Is it possible to obtain a sharp  lifespan estimate for our problem \eqref{Sigma-Evolution-Modulus}?}
\end{center}
We will give a positive answer to this question, providing sharp lifespan estimates for the Cauchy problem \eqref{Sigma-Evolution-Modulus}, even in the case of fractional Laplacians, for any $\delta\in[0,\sigma]$. 
More specifically, strongly motivated by the Dini condition \eqref{Dini-Condition}, we introduce the non-negative and decreasing auxiliary function $\ml{H}=\ml{H}(\tau)$ defined by
\begin{align}\label{H-function}
	\ml{H}:\tau\in[0,\tau_0]\to \ml{H}(\tau):=\int_{\tau}^{\tau_0}\frac{\mu(\varrho)}{\varrho}\,\mathrm{d}\varrho \,\in[0,+\infty]
\end{align}
with $\tau_0>0$ fixed. 
\begin{remark}
	Notice that $\ml{H}(0)<+\infty$ if, and only if, the Dini condition \eqref{Dini-Condition} is satisfied; moreover, its inverse function $\ml{H}^{-1}(\omega)$ exists and is unique due to the strict monotonicity of $\ml H$.
\end{remark}
\noindent Recalling $\kappa=2\sigma-\min\{2\delta,\sigma\}$ we are going to determine the sharp lifespan estimates
\begin{align}\label{Sharp-Lifespan-Est}
	T_{\varepsilon}\begin{cases}
		=+\infty&\mbox{if}\ \ \ml{H}(0)<+\infty,\\
		\approx \displaystyle{\left[\varepsilon^{-1}\ml{H}^{-1}\left(C\varepsilon^{-\frac{2\sigma}{n-\min\{2\delta,\sigma\}}}+\ml{H}(c\,\varepsilon)\right)\right]^{-\frac{\kappa}{n-\min\{2\delta,\sigma\}}}}&\mbox{if}\ \ \ml{H}(0)=+\infty,
	\end{cases}
\end{align}
where $c$ and $C$ are positive constants independent of $\varepsilon$. In particular, taking $\mu(\tau)=\tau^{p-p_{\mathrm{c}}(n)}$ and $\mu(\tau)=1$, respectively, one can  derive the sharp lifespan estimates for the semilinear model \eqref{Sigma-Evolution-Power}  with the power-type nonlinearity $|u|^p$ in the sub-critical case $1<p<p_{\mathrm{c}}(n)$ and the critical case $p=p_{\mathrm{c}}(n)$ (see Proposition \ref{Coro-Classical-sigma-evolution}). 

Our paper is organized as follows. In Section \ref{Section-Main-Result} we state our main results, providing the sharp lifespan estimates for the semilinear evolution equation \eqref{Sigma-Evolution-Modulus} even in the case of fractional Laplacians; moreover, we present some typical examples of moduli of continuity which satisfy the assumptions of the given results, showing the corresponding lifespan estimates for the problem \eqref{Sigma-Evolution-Modulus}. On the one hand, motivated by \cite{D'Abbicco-Girardi=2023,Girardi=2024}, in Section \ref{Section-Existence} we apply some known $L^p$-$L^q$ estimates for the linear problem associated to \eqref{Sigma-Evolution-Modulus}, together with the Banach fixed point argument, to demonstrate the existence of local and global $L^{p_{\mathrm{c}}(n)}\cap L^{\infty}$ solutions to \eqref{Sigma-Evolution-Modulus} depending on whether $\mu$ satisfies the Dini condition \eqref{Dini-Condition}; additionally, we derive the lower bound estimate of the lifespan $T_{\varepsilon}$ in \eqref{Sharp-Lifespan-Est} if $\ml{H}(0)=+\infty$. On the other hand, motivated by \cite{D'Abbicco-Fujiwara=2021,Ikeda-Sobajima=2019},  in Section \ref{Section-Blow-up} we develop a modified test function method to prove the blow-up in finite time of any weak solution to \eqref{Sigma-Evolution-Modulus} (in the sense of Definition \ref{Defn-Weak-Solution}) when the non-Dini condition \eqref{not-Dini-Condition} holds, under a suitable sign assumption on the initial data. Particularly, for $\delta$ and $\sigma$ fractional numbers, the upper bound estimate of the lifespan $T_{\varepsilon}$ in \eqref{Sharp-Lifespan-Est} is derived if $\ml{H}(0)=+\infty$, by introducing new test functions which allow to overcome some new difficulties due to the presence of both non-local differential operators and general nonlinearities. We believe that the approaches used in this paper can be applied to derive sharp lifespan estimates in other related models; for instance, the semilinear evolution equation \eqref{eq:CPgeneral} with fractional Laplacians and the derivative-type nonlinearity $|u_t|^{1+\frac{\min\{2\delta,\sigma\}}{n}}\mu(|u_t|)$ can be treated.

\subsection{Difficulties in test function methods}
\hspace{5mm}We stress that our test function $\varphi(t,x)$ introduced in the proofs of Theorems \ref{Thm-Blow-up} and \ref{Thm-Upper-Bound} is not simply a generalization of the one used in \cite{D'Abbicco-Fujiwara=2021} (where some blow-up results for problem \eqref{Sigma-Evolution-Power} were studied in the case $\sigma,\delta\in \mb R_+\backslash \mb N_+$), or in \cite{Ebert-Girardi-Reissig=2020} (where the general nonlinearity $|u|^p\mu(|u|)$ was considered). \\
In particular, the necessity to construct new test functions is motivated by the following main difficulties which arise simultaneously in problem \eqref{eq:CPgeneral}:
\begin{itemize}
	\item if $\psi=\psi(t,x)$ is a classical test function with compact support and $\bar \sigma\in \mb R_+\backslash \mb N_+$, then $(-\Delta)^{\bar{\sigma}}\psi^2 $ in general do not have compact support;  in particular, one cannot carry out the estimate $|(-\Delta)^{\bar{\sigma}}\psi^2|\leqslant 2|\psi\,(-\Delta)^{\bar\sigma} \psi|$  (cf. \cite[Page 3]{D'Abbicco-Fujiwara=2021} for detailed explanations); 
	\item the presence of a modulus of continuity in the nonlinearity $|u|^{p_{\mathrm{c}}(n)}\mu(|u|)$ makes necessary the use of an improved test function method (introduced in \cite{Ikeda-Sobajima=2019}) to get optimal blow-up results and sharp upper bound estimates of the lifespan; among the various novelties, this method employs cut-off functions with cone-type domains. 
\end{itemize}
In Lemma \ref{Lem-Varphi-Fractional-Derivatives} we will provide some useful estimates for our test functions which are essential to prove the desired results.

\section{Main results}\setcounter{equation}{0}\label{Section-Main-Result}

\subsection{On the critical regularity of nonlinearities}
\hspace{5mm}We begin by stating the global in time existence result for the semilinear evolution equation \eqref{Sigma-Evolution-Modulus} under the Dini condition \eqref{Dini-Condition} on $\mu$; in particular, we are mainly interested to infinitesimal functions $\mu(\tau)$ which tend to $0$ as $\tau\to0^+$ slower than any power $\{\tau^{\beta}\}_{\beta\in(0,1]}$ (see later Example \ref{Example-Dini-critical}).
\begin{theorem}\label{Thm-GESDS} Let $\sigma\geqslant 1$ and $\delta\in[0,\sigma]$. Let $\mu:[0,+\infty)\to[0,+\infty)$ be a modulus of continuity fulfilling the following assumptions:
	\begin{description}
		\item[$(A_1)$] $\mu\in\ml{C}^1(\mb{R}_+)$ and there exists $\tau_1>0$ such that $|\mu'(\tau)|\lesssim \tau^{-1}\mu(\tau)$ for any $\tau\in(0,\tau_1)$;
		\item[$(A_2)$] $\mu$ satisfies the Dini condition, i.e. \eqref{Dini-Condition} holds true for some (and then for any) $\tau_0>0$.
	\end{description}
	Moreover, assume $2\delta<n<2\sigma$ in the effective case $\delta\in[0,\sigma/2)$, $\sigma<n$ in the limit case $\delta=\sigma/2$, $n=1$ with $\sigma\in(2/3,1)$ or $1<\sigma<n\leqslant\min\{2\sigma,\bar{n}(\sigma)\}$  in the non-effective case $\delta\in(\sigma/2,\sigma]$ where
	\begin{align*}
		\bar{n}(\sigma):=\frac{3\sigma-2}{2}\left(\sqrt{1+8\sigma(3\sigma-2)^{-2}}+1\right).
	\end{align*}
	Suppose also
	\begin{align*}
		(u_0,u_1)\in\ml{A}_{\delta,\sigma}:=\begin{cases}
			(H^{\sigma}\cap L^1)\times (L^2\cap L^1)&\mbox{if}\ \  \delta\in[0,\sigma/2),\\
			(L^{\infty}\cap L^1)\times(L^{\frac{n}{\sigma}}\cap L^1)&\mbox{if}\ \ \delta=\sigma/2,\\
			(H^{2\delta}_m\cap L^1)\times (L^{m}\cap L^1)&\mbox{if}\ \ \delta\in(\sigma/2,\sigma],
		\end{cases}
	\end{align*}
	for some $m>1$ such that $2\delta m>n$.
	Then, there exists $\varepsilon_0>0$ such that for any $\varepsilon\in(0,\varepsilon_0]$ the semilinear Cauchy problem \eqref{Sigma-Evolution-Modulus} has a uniquely determined global in time  solution
	\begin{align*}
		u\in\ml{C}([0,+\infty),L^{p_{\mathrm{c}}(n)}\cap L^{\infty}),
	\end{align*}
	satisfying the following decay estimate:
	\begin{align}\label{Est-L^q}
		\|u(t,\cdot)\|_{L^q}\leqslant C\varepsilon(1+t)^{-\frac{1}{\kappa}\left(n-\min\{2\delta,\sigma\}-\frac{n}{q}\right)}\|(u_0,u_1)\|_{\ml{A}_{\delta,\sigma}}
	\end{align}
	for any $p_{\mathrm{c}}(n)\leqslant q\leqslant+\infty$, where $C>0$ is independent of $t$ and $\varepsilon$.
\end{theorem}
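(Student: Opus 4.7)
The plan is to cast \eqref{Sigma-Evolution-Modulus} as a fixed-point problem for the Duhamel operator
\begin{equation*}
N[u](t,\cdot) := \varepsilon K_0(t,\cdot)\ast_x u_0 + \varepsilon K_1(t,\cdot)\ast_x u_1 + \int_0^t K_1(t-s,\cdot)\ast_x F(u(s,\cdot))\,\mathrm{d}s,
\end{equation*}
where $F(u):=|u|^{p_{\mathrm{c}}(n)}\mu(|u|)$ and $K_0,K_1$ are the fundamental solutions of the linearized equation associated to data $(\delta_0,0)$ and $(0,\delta_0)$, respectively. I would work in the solution space
\begin{equation*}
X(T):=\mathcal{C}\bigl([0,T],L^{p_{\mathrm{c}}(n)}\cap L^\infty\bigr),\qquad \|u\|_{X(T)}:=\sup_{t\in[0,T]}\sup_{q\in\{p_{\mathrm{c}}(n),\infty\}}(1+t)^{\alpha(q)}\|u(t,\cdot)\|_{L^q},
\end{equation*}
with $\alpha(q):=\kappa^{-1}(n-\min\{2\delta,\sigma\}-n/q)$, and look for the fixed point in a closed ball $B_R\subset X(T)$ of radius $R\sim \varepsilon\|(u_0,u_1)\|_{\mathcal{A}_{\delta,\sigma}}$.

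For the linear summand I would invoke the $L^p$--$L^q$ decay estimates for the fundamental solutions already established in the literature cited in the introduction (covering each of the three regimes, see e.g. \cite{D'Abbicco-Ebert=2014,D'Abbicco-Ebert=2017,D'Abbicco-Girardi=2023,Dao-Reissig=2021-01}), which in every case give $\|K_0\ast u_0+K_1\ast u_1\|_{L^q}\lesssim (1+t)^{-\alpha(q)}\|(u_0,u_1)\|_{\mathcal{A}_{\delta,\sigma}}$ for $q\in[p_{\mathrm{c}}(n),\infty]$, together with the companion inequalities for the Duhamel kernel acting on an $L^1\cap L^q$ source.

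The crucial step is controlling the nonlinear contribution. Using the monotonicity of $\mu$ and interpolating between the $L^{p_{\mathrm{c}}(n)}$ and $L^\infty$ bounds for $u$, one obtains
\begin{equation*}
\|F(u(s,\cdot))\|_{L^1\cap L^{p_{\mathrm{c}}(n)}}\lesssim \|u\|_{X(T)}^{p_{\mathrm{c}}(n)}\,\mu\bigl(C\|u\|_{X(T)}(1+s)^{-\alpha(\infty)}\bigr)(1+s)^{-\beta},
\end{equation*}
with $\beta$ chosen so that, after convolving with the linear Duhamel bound and splitting the time integral into $[0,t/2]$ and $[t/2,t]$, the polynomial prefactor equals $(1+t)^{-\alpha(q)}$ and the remainder takes the critical form
\begin{equation*}
\int_0^t (1+s)^{-1}\mu\bigl(C\|u\|_{X(T)}(1+s)^{-\alpha(\infty)}\bigr)\,\mathrm{d}s.
\end{equation*}
The substitution $\tau:=C\|u\|_{X(T)}(1+s)^{-\alpha(\infty)}$ transforms this into a multiple of $\int_0^{\tau_0}\mu(\tau)/\tau\,\mathrm{d}\tau$, which is finite uniformly in $t$ and $T$ precisely thanks to the Dini hypothesis $(A_2)$; in the notation of \eqref{H-function} one then has $\|N[u]-\varepsilon u^{\mathrm{lin}}\|_{X(T)}\leqslant C\|u\|_{X(T)}^{p_{\mathrm{c}}(n)}\mathcal{H}(0)$.

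The contraction property would be obtained from
\begin{equation*}
|F(u)-F(v)|\lesssim |u-v|\bigl(|u|^{p_{\mathrm{c}}(n)-1}+|v|^{p_{\mathrm{c}}(n)-1}\bigr)\bigl(\mu(|u|)+\mu(|v|)\bigr),
\end{equation*}
which follows from the mean value theorem together with assumption $(A_1)$, ensuring that $|F'(s)|\lesssim s^{p_{\mathrm{c}}(n)-1}\mu(s)$. Rerunning the above computation with this bound yields $\|N[u]-N[v]\|_{X(T)}\leqslant CR^{p_{\mathrm{c}}(n)-1}\mathcal{H}(0)\|u-v\|_{X(T)}$; choosing $\varepsilon_0$ so small that both $R=R(\varepsilon)$ satisfies the self-mapping inequality and $CR^{p_{\mathrm{c}}(n)-1}\mathcal{H}(0)<1$, the Banach fixed-point theorem produces a unique solution in $B_R$. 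All constants being independent of $T$, letting $T\to+\infty$ yields the global solution and \eqref{Est-L^q}. The main obstacle I foresee is, regime by regime, selecting the source data space $L^1\cap L^q$ in which to apply the linear estimates so that the polynomial decay matches the target norm and, simultaneously, the Dini substitution remains admissible; in particular, in the non-effective case the restriction to $H^{2\delta}_m\cap L^1$ with $2\delta m>n$ tightly constrains the admissible Lebesgue pairs and is ultimately responsible for the dimensional bound $n\leqslant\bar n(\sigma)$.
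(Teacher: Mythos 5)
Your proposal follows the same route as the paper: a Duhamel fixed-point argument in the time-weighted space $\mathcal{C}([0,T],L^{p_{\mathrm{c}}(n)}\cap L^\infty)$, with the nonlinear estimate reduced (via monotonicity of $\mu$, interpolation between the $L^{p_{\mathrm{c}}(n)}$ and $L^\infty$ weights, and the substitution $\tau=C\nu(1+s)^{-\alpha(\infty)}$) to the integral $\int_0^{\tau_0}\mu(\tau)\,\tau^{-1}\,\mathrm{d}\tau$, and the contraction obtained from the mean-value-theorem bound $|F'(s)|\lesssim s^{p_{\mathrm{c}}(n)-1}\mu(s)$ coming from $(A_1)$ — this is precisely the paper's estimate \eqref{Crucial-04} together with Lemma \ref{Lemma-Nonlinearity} and the discussion in Subsection \ref{Subsection-Philosophy}. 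The only details you defer (the choice of source Lebesgue pair regime by regime, and the separate treatment of the singular limit case $\delta=\sigma/2$ near $t=0$) are the same technical points the paper resolves in Subsection \ref{Sub-section-detail}, and your diagnosis of where the dimensional restriction $n\leqslant\bar{n}(\sigma)$ enters is consistent with the paper's use of the non-effective linear estimates.
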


\begin{remark}
	\label{rem:A1}
	Defining the nonlinearity $g=g(\tau)$ by
	\begin{align}\label{g-function}
		g(\tau):=|\tau|^{p_{\mathrm{c}}(n)}\mu(|\tau|),
	\end{align}
	the assumption $(A_1)$ can be replaced by the condition
	\[ 	|g'(\tau)|\lesssim |\tau|^{p_{\mathrm{c}}(n)-1}\mu(|\tau|) \ \ \mbox{for any}\ \ |\tau|\in(0,\tau_1),\]
	which is a direct consequence of $(A_1)$ and would be sufficient to prove Theorem \ref{Thm-GESDS}.
\end{remark}

Next, we study the optimality of the Dini condition for our model \eqref{Sigma-Evolution-Modulus} by proving blow-up of weak solutions under the non-Dini condition \eqref{not-Dini-Condition}. Before proposing the definition of weak solutions, we firstly introduce a suitable function space for test functions.
\begin{defn}\label{Defn-test-space}
	Let  $q_0$ and $s_0$ defined by $\eqref{eq:s0}$. For any $T>0$ the space $\mathfrak{X}_{q_0}([0,T)\times\mb{R}^n)$ is the set of all functions $\varphi:[0,T)\times\mb{R}^n\to \mb{R}$ such that $\varphi\in \ml{C}^2([0,T), \ml{C}^{2+2[\sigma]}(\mb R^n))$ and the functions
	\begin{align}
		\label{eq:test_space_assumption1}
		\left(1+t^2+|x|^{2\kappa}\right)^{\frac{q_0}{2\kappa}}\partial_t^j\varphi(t,x)&\ \ \mbox{for}\ \ j=0,1,2,\\
		\label{eq:test_space_assumption2}
		\left(1+t^2+|x|^{2\kappa}\right)^{\frac{q_0}{2\kappa}}(-\Delta)^{\bar{\sigma}}\partial_t^j\varphi(t,x)&\ \ \mbox{for}\ \ j=0,1,
	\end{align}
	are uniformly bounded for any $\bar\sigma$ integer in $\{0,\dots, [\sigma] +1\}$; additionally, if $\sigma\notin \mb N_+$ the functions in \eqref{eq:test_space_assumption1} and \eqref{eq:test_space_assumption2} are uniformly bounded for any $0< \bar{\sigma}< [\sigma]+1$ satisfying $\{\bar{\sigma}\}\in[s_0,1)$.
\end{defn}
\begin{remark}
	The vector space $\mathfrak{X}_{q_0}([0,T)\times\mb{R}^n)$ is non-empty; for example,
	\begin{align*}
		\left(1+t^{2\theta}+|x|^{2\theta\kappa}\right)^{-r_0}\in\mathfrak{X}_{q_0}([0,T)\times\mb{R}^n)
	\end{align*}
	if $\theta\geqslant\max\{1,([\sigma]+1)/\kappa \}$ and $r_0\geqslant q_0/(2\theta\kappa)$ (see later Lemma \ref{Lem-Varphi-Fractional-Derivatives}).\\ Moreover, if $\varphi\in\mathfrak{X}_{q_0}([0,T)\times\mb{R}^n)$, we notice $\varphi(t,\cdot)\in L^{\infty}(\mb{R}^n;\langle x\rangle^{q_0}\,\mathrm{d}x)\subset L^1(\mb R^n)$ for any $t\in[0,T)$, due to $q_0>n$, and similarly, $(-\Delta)^{\bar{\sigma}}\varphi(t,\cdot)\in L^1(\mb R^n)$  for any $t\in[0,T)$ and $\bar{\sigma}\geqslant 0$ chosen as in Definition \ref{Defn-test-space}.
\end{remark}
\begin{defn}\label{Defn-Weak-Solution}
	Let $\sigma\geqslant 1$, $\delta\in[0,\sigma]$ and $T\in(0,+\infty]$. Let us consider $u_0,u_1\in L^1(\mb{R}^n;\langle x\rangle^{-q_0}\,\mathrm{d}x)$,  with $q_0$ and $s_0$ given in Definition \ref{Defn-test-space}. We say that $u=u(t,x)$ is a weak solution to the semilinear Cauchy problem \eqref{Sigma-Evolution-Modulus}  if
	\begin{align*}
		u\in L_{\mathrm{loc}}^{p_{\mathrm{c}}(n)}\left([0,T), L^{p_{\mathrm{c}}(n)}(\mb{R}^n;\langle x\rangle^{-q_0}\,\mathrm{d}x)\right),
	\end{align*}
	and $u$ fulfills the integral relation
	\begin{align}
		&\int_0^T\rho(t)\int_{\mb{R}^n}g\big(u(t,x)\big)\varphi(t,x)\,\mathrm{d}x\,\mathrm{d}t\notag\\
		&=\int_0^T\int_{\mb{R}^n}u(t,x)\left(\partial_t^2+(-\Delta)^{\sigma}-(-\Delta)^{\delta}\partial_t\right)\big(\rho(t)\,\varphi(t,x)\big)\,\mathrm{d}x\,\mathrm{d}t-\varepsilon\int_{\mb{R}^n}u_1(x)\,\rho(0)\,\varphi(0,x)\,\mathrm{d}x\notag\\
		&\quad+\varepsilon\int_{\mb{R}^n}u_0(x)\left(\rho'(0)\,\varphi(0,x)+\rho(0)\,\varphi_t(0,x)-\rho(0)\,(-\Delta)^{\delta}\varphi(0,x)\right)\mathrm{d}x\label{Equality-Weak-Solution}
	\end{align}
	for any $\varphi\in\mathfrak{X}_{q_0}([0,T)\times\mb{R}^n)$ and $\rho\in\ml{C}^2_0([0,T))$.
\end{defn}

\begin{remark}
	All the integral terms in Definition \ref{Defn-Weak-Solution} are well-defined. Indeed, since $\mathrm{supp}\,\rho\subset[0,T_0]$ with $T_0<T$, we can employ the decay properties of $\varphi\in\mathfrak{X}_{q_0}([0,T_0)\times\mb{R}^n)$ described in Definition \ref{Defn-test-space} to get
	\begin{align*}
		&\int_0^{T}\int_{\mb{R}^n}|u(t,x)|\left|\left(\partial_t^2+(-\Delta)^{\sigma}-(-\Delta)^{\delta}\partial_t\right)\big(\rho(t)\,\varphi(t,x)\big)\right|\,\mathrm{d}x\,\mathrm{d}t\\
		&\lesssim\left(\int_0^{T_0}\int_{\mb{R}^n}|u(t,x)|^{p_{\mathrm{c}}(n)}\langle x\rangle^{-q_0}\,\mathrm{d}x\,\mathrm{d}t\right)^{\frac{1}{p_{\mathrm{c}}(n)}}\left(\int_0^{T_0}\int_{\mb{R}^n}\langle x\rangle^{\frac{q_0p'_{\mathrm{c}}(n)}{p_{\mathrm{c}}(n)}}\left(1+t^2+|x|^{2\kappa}\right)^{-\frac{q_0p'_{\mathrm{c}}(n)}{2\kappa}}\mathrm{d}x\,\mathrm{d}t\right)^{\frac{1}{p'_{\mathrm{c}}(n)}}\\
		&\lesssim \|u\|_{L_{\mathrm{loc}}^{p_{\mathrm{c}}(n)}\left([0,T_0), L^{p_{\mathrm{c}}(n)}(\mb{R}^n;\langle x\rangle^{-q_0}\,\mathrm{d}x)\right)}\left(\int_0^{T_0}\int_{\mb{R}^n}\langle x\rangle^{-q_0}\,\mathrm{d}x\,\mathrm{d}t\right)^{\frac{1}{p'_{\mathrm{c}}(n)}}<+\infty,
	\end{align*}
	being $q_0>n$. Analogously, since $u_0,u_1\in L^1(\mb{R}^n;\langle x\rangle^{-q_0}\,\mathrm{d}x)$ and
	\begin{align*}
		|\varphi(0,x)|+|\varphi_t(0,x)|+|(-\Delta)^{\delta}\varphi(0,x)|\lesssim\langle x\rangle^{-q_0},
	\end{align*}
	we can conclude that the integrals for $u_0,u_1$ are finite.
	
\end{remark}



In the following we state our blow-up result which holds true even in the case of fractional Laplacians.
\begin{theorem}\label{Thm-Blow-up}
	Let $\sigma\geqslant 1$ and $\delta\in[0,\sigma]$. Let $\mu:[0,+\infty)\to[0,+\infty)$ be a modulus of continuity fulfilling the following assumptions:
	\begin{description}
		\item[$(A_3)$] the function $g:\tau\in\mb{R}\to |\tau|^{p_{\mathrm{c}}(n)}\mu(|\tau|)\in[0,+\infty)$ is convex;
		\item[$(A_4)$] $\mu$ satisfies the non-Dini condition, i.e. \eqref{not-Dini-Condition} holds for some (and, then, for any) $\tau_0>0$.
	\end{description}
	We also assume that the Cauchy data $u_0$ and $u_1$ satisfy
	\begin{align}
		\label{eq:data_sign_delta=0}u_0,\, u_1\in L^1(\mb R^n; \langle x\rangle^{-q_0}\,\mathrm{d}x) \quad \text{and} \quad \int_{\mb R^n} \big(u_0(x)+u_1(x)\big)\langle x\rangle^{-q_0}\,\mathrm{d}x >0,
	\end{align}
	if $\delta=0$; whereas if $\delta>0$ we suppose that $u_1$ belongs to $L^1(\mb R^n; \langle x\rangle^{-q_0}\,\mathrm{d}x)$, and additionally one of the following conditions holds:
	\begin{subequations}
		\begin{align}
			\label{eq:data_sign_delta>0_1}  u_0\in L^1(\mb R^n) \quad &\text{and} \quad  \int_{\mb R^n}u_1(x)\langle x\rangle^{-q_0}\,\mathrm{d}x >0,\\
			\label{eq:data_sign_delta>0_2} u_0\in L^1(\mb R^n; \langle x\rangle^{-q_0}\,\mathrm{d}x) \quad &\text{and} \quad \exists \bar{c}>0 : u_1(x)\geqslant 2\bar{c} |u_0(x)| \quad \forall x\in \mb R^n.
		\end{align}
	\end{subequations}
	Thus, if $u$ is a  local in time weak solution to the semilinear Cauchy problem \eqref{Sigma-Evolution-Modulus} on $[0,T_{\varepsilon})$ according to Definition \ref{Defn-Weak-Solution}, then $u$ blows up in finite time, i.e. $T_{\varepsilon}<+\infty$.
\end{theorem}

Combining the results from Theorems \ref{Thm-GESDS} and \ref{Thm-Blow-up}, we see that the Dini condition \eqref{Dini-Condition} for the modulus of continuity $\mu$ in the semilinear evolution equation \eqref{Sigma-Evolution-Modulus} with $\delta\in[0,\sigma]$,  is a threshold condition that separates the global in time existence of small data solutions from the blow-up of local in time solutions. These results generalize those in \cite{D'Abbicco-Girardi=2023,Girardi=2024}, where only  the case with $\sigma$ and $\delta$ in $\mb N_0$ was considered.

\begin{exam}\label{Example-Dini-critical}
	Let $\tau_0>0$ be sufficiently small. The assumptions $(A_3)-(A_4)$ in Theorem \ref{Thm-Blow-up} hold true if $\mu: [0,+\infty)\to [0,+\infty)$ is a modulus of continuity defined by the following functions in a small interval $(0,\tau_0]$:
	\begin{description}
		\item[(1MoC)] $\mu(\tau)=(\ln\frac{1}{\tau})^{-\gamma}$ with $\gamma\in(0,1]$;
		\item[(2MoC)] $\displaystyle{\mu(\tau)=\prod_{j=1}^{k-1}\left(\ln^{[j]}\tfrac{1}{\tau}\right)^{-1}\left(\ln^{[k]}\tfrac{1}{\tau}\right)^{-\gamma}  }$ with $\gamma\in(0,1]$ and $k\in \mathbb{N}_+$, $k\geqslant 2$.
	\end{description}
	Therefore, for these choices of $\mu$ the blow-up result holds true, provided that the initial data fulfill the sign conditions required in Theorem \ref{Thm-Blow-up}.\\
	On the other hand, when $\gamma\in(1,+\infty)$ in (1MoC) and (2MoC), these functions $\mu(\tau)$ satisfy the assumptions $(A_1)-(A_2)$ from Theorem \ref{Thm-GESDS}, i.e. the global in time existence result is valid. Namely, in these examples the power $\gamma=1$ determines a critical threshold in the existence of global in time weak solutions to \eqref{Sigma-Evolution-Modulus}.
\end{exam}

\subsection{On the sharp estimates of the lifespan}\label{Subsection-Sharp-lifespan}
\hspace{5mm}As we explained in Theorem \ref{Thm-Blow-up}, under the non-Dini condition \eqref{not-Dini-Condition} the non-trivial local in time solution may blow up in finite time; this motivates us to provide more detailed information about the lifespan $T_{\varepsilon}$. In the following we will state two new results which explain the upper and lower bounds estimates of the lifespan, respectively; as a result, the sharp estimates \eqref{Sharp-Lifespan-Est} of the lifespan for the Cauchy problem \eqref{Sigma-Evolution-Modulus} will follow. 

\begin{theorem}\label{Thm-Lower-Bound}
	Let $\sigma\geqslant 1$ and $\delta\in[0,\sigma]$; suppose that $\mu:[0,+\infty)\to[0,+\infty)$ is a modulus of continuity fulfilling the assumptions $(A_1)$ and $(A_4)$. Then, there exist $\varepsilon_0>0$ and three constants $k_1,k_2,K>0$, independent of $\varepsilon$, such that for any $\varepsilon\in(0,\varepsilon_0]$ and any $T>0$ satisfying 
	\begin{align}\label{Lower-Bound-Est-Lifespan}
		T\leqslant T_{\varepsilon,\ell}:= K\varepsilon^{\frac{\kappa}{n-\min\{2\delta,\sigma\}}}\left[\ml{H}^{-1}\left(k_1\varepsilon^{-\frac{2\sigma}{n-\min\{2\delta,\sigma\}}}+\ml{H}(k_2\varepsilon)\right)\right]^{-\frac{\kappa}{n-\min\{2\delta,\sigma\}}},
	\end{align}
	the semilinear Cauchy problem \eqref{Sigma-Evolution-Modulus} has a uniquely determined local in time solution
	\begin{align*}
		u\in\ml{C}([0,T],L^{p_{\mathrm{c}}(n)}\cap L^{\infty})
	\end{align*}
	satisfying the estimate \eqref{Est-L^q} for any $t\in[0,T]$.
\end{theorem}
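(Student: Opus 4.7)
My approach is to repeat the Banach fixed-point argument underlying the global existence Theorem \ref{Thm-GESDS} but restricted to a finite interval $[0,T]$, and then to read off the largest admissible $T$ from the closure condition, exploiting the non-Dini behavior of $\mu$ through the function $\mathcal{H}$ defined in \eqref{H-function}. I would work in the complete metric space $X(T):=\mathcal{C}([0,T],L^{p_{\mathrm{c}}(n)}\cap L^\infty)$ endowed with the norm
\[
\|u\|_{X(T)}:=\sup_{t\in[0,T]}\;\sup_{q\in\{p_{\mathrm{c}}(n),\infty\}} (1+t)^{\frac{1}{\kappa}(n-\min\{2\delta,\sigma\}-n/q)}\,\|u(t,\cdot)\|_{L^q},
\]
which encodes precisely the decay rate \eqref{Est-L^q}, and consider the Duhamel map
\[
\Phi(u)(t,\cdot):=\varepsilon\,u^{\mathrm{lin}}(t,\cdot)+\int_0^t K(t-s)* g(u(s,\cdot))\,\mathrm{d}s,
\]
where $g(\tau)=|\tau|^{p_{\mathrm{c}}(n)}\mu(|\tau|)$ and $K(t)$ denotes the kernel solving the corresponding linear inhomogeneous problem. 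The known $L^p$-$L^q$ estimates for the linear $\sigma$-evolution model with damping (used in the proof of Theorem \ref{Thm-GESDS}) yield $\|\varepsilon u^{\mathrm{lin}}\|_{X(T)}\leqslant c_0\varepsilon\|(u_0,u_1)\|_{\mathcal{A}_{\delta,\sigma}}$ uniformly in $T$.

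For the nonlinear contribution, using that $\mu$ is increasing together with the interpolation bound
\[
\|g(u(s,\cdot))\|_{L^q}\leqslant \|u(s,\cdot)\|_{L^{qp_{\mathrm{c}}(n)}}^{p_{\mathrm{c}}(n)}\,\mu\bigl(\|u(s,\cdot)\|_{L^\infty}\bigr),
\]
and assumption $(A_1)$ through Remark \ref{rem:A1} (which gives $|g'(\tau)|\lesssim |\tau|^{p_{\mathrm{c}}(n)-1}\mu(|\tau|)$ and hence the Lipschitz-type estimate for $g(u)-g(v)$), one arrives at
\[
\|\Phi(u)\|_{X(T)}\leqslant c_0\varepsilon\|(u_0,u_1)\|_{\mathcal{A}_{\delta,\sigma}}+c_1\,M^{p_{\mathrm{c}}(n)}\,\mathcal{I}(T),
\]
whenever $\|u\|_{X(T)}\leqslant M$, where
\[
\mathcal{I}(T):=\int_0^T (1+s)^{-\frac{p_{\mathrm{c}}(n)(n-\min\{2\delta,\sigma\})}{\kappa}}\,\mu\!\left(M(1+s)^{-\frac{n-\min\{2\delta,\sigma\}}{\kappa}}\right)\mathrm{d}s.
\]
Performing the change of variable $\tau=M(1+s)^{-(n-\min\{2\delta,\sigma\})/\kappa}$, recalling the definition \eqref{Critical-Exponent} of $p_{\mathrm{c}}(n)$, the polynomial factor and the Jacobian combine to produce exactly a factor $\tau^{-1}$ in the integrand, so that
\[
\mathcal{I}(T)\lesssim M^{-\frac{2\sigma}{n-\min\{2\delta,\sigma\}}}\int_{\tau_T}^{M}\frac{\mu(\tau)}{\tau}\,\mathrm{d}\tau= M^{-\frac{2\sigma}{n-\min\{2\delta,\sigma\}}}\bigl(\mathcal{H}(\tau_T)-\mathcal{H}(M)\bigr),
\]
with $\tau_T:=M(1+T)^{-(n-\min\{2\delta,\sigma\})/\kappa}$; non-Dini $(A_4)$ ensures that $\mathcal{H}$ is strictly decreasing and blows up at $0$, so $\mathcal{H}^{-1}$ is well-defined on $[\mathcal{H}(M),+\infty)$.

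Choosing $M=k_2\varepsilon$ for suitable $k_2>\!c_0\|(u_0,u_1)\|_{\mathcal{A}_{\delta,\sigma}}$, the self-mapping property $\|\Phi(u)\|_{X(T)}\leqslant M$ reduces to
\[
\mathcal{H}(\tau_T)\leqslant k_1\,\varepsilon^{-\frac{2\sigma}{n-\min\{2\delta,\sigma\}}}+\mathcal{H}(k_2\varepsilon),
\]
and inverting the decreasing $\mathcal{H}$ gives $\tau_T\geqslant \mathcal{H}^{-1}\bigl(k_1\varepsilon^{-2\sigma/(n-\min\{2\delta,\sigma\})}+\mathcal{H}(k_2\varepsilon)\bigr)$, which unpacks precisely to the bound $T\leqslant T_{\varepsilon,\ell}$ of \eqref{Lower-Bound-Est-Lifespan}. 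The same change-of-variable computation, applied to $\|\Phi(u)-\Phi(v)\|_{X(T)}$ via the Lipschitz estimate for $g$, closes the contraction for $\varepsilon\leqslant\varepsilon_0$ with $\varepsilon_0$ small, and uniqueness follows from Banach's theorem. The main technical obstacle that I anticipate is organizing the $L^{p_{\mathrm{c}}(n)}\cap L^\infty$ interpolation so that the modulus of continuity $\mu$ stays attached to the $L^\infty$-norm (which carries the faster decay) and so that a single substitution produces the $\mathcal{H}$-structure uniformly across the effective, limit and non-effective regimes $\delta\in[0,\sigma]$; once this is arranged the rest of the argument is bookkeeping with the linear estimates available from the proof of Theorem \ref{Thm-GESDS}.
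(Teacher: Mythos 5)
Your proposal follows essentially the same route as the paper: Banach fixed point in the weighted space $X(T)=\mathcal{C}([0,T],L^{p_{\mathrm{c}}(n)}\cap L^\infty)$ (with exactly the paper's time weights), estimating the Duhamel term by an integral that is converted to an $\mathcal{H}$-increment, then inverting $\mathcal{H}$ to read off the admissible $T$ (compare Subsections \ref{Subsection-Philosophy} and \ref{Sub-section-detail}, in particular the case \textbf{(b)} and \eqref{LLLL-01}). The outline is right, but there is an internal arithmetic inconsistency in your $\mathcal{I}(T)$.

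The correct power of $(1+s)$ in $\mathcal{I}(T)$ is $-1$, not $-\frac{p_{\mathrm{c}}(n)(n-\min\{2\delta,\sigma\})}{\kappa}$. What actually enters the Duhamel integral (after the split $[0,t/2]\cup[t/2,t]$ which lets the kernel decay $(1+t-s)^{-a(q)}$ be pulled out and cancelled against the $X(T)$-weight) is $\|g(u(s,\cdot))\|_{L^1}$, and
\[
\|g(u(s,\cdot))\|_{L^1}\lesssim\mu\bigl(\|u(s,\cdot)\|_{L^\infty}\bigr)\,\|u(s,\cdot)\|_{L^{p_{\mathrm{c}}(n)}}^{p_{\mathrm{c}}(n)}\lesssim (1+s)^{-1}\,\mu\!\left(M(1+s)^{-\frac{n-\min\{2\delta,\sigma\}}{\kappa}}\right)M^{p_{\mathrm{c}}(n)},
\]
because the $X(T)$-weight for $L^{p_{\mathrm{c}}(n)}$ is $(1+s)^{1/p_{\mathrm{c}}(n)}$. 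Your exponent corresponds instead to controlling $\|u(s,\cdot)\|_{L^\infty}^{p_{\mathrm{c}}(n)}$, i.e. the $L^\infty$-norm of $g(u)$, which is not what the linear $L^1\to L^q$ mapping properties need. With the correct $(1+s)^{-1}$ integrand, the substitution $\tau=M(1+s)^{-(n-\min\{2\delta,\sigma\})/\kappa}$ gives $\mathrm{d}s/(1+s)=-\frac{\kappa}{n-\min\{2\delta,\sigma\}}\,\mathrm{d}\tau/\tau$ and therefore $\mathcal{I}(T)\lesssim\mathcal{H}(\tau_T)-\mathcal{H}(M)$ with \emph{no} $M$-dependent prefactor; the factor $M^{-\frac{2\sigma}{n-\min\{2\delta,\sigma\}}}$ you claim does not appear. (You can check that with your stated exponent the substitution does not produce $\mathrm{d}\tau/\tau$ at all unless $\delta=0$, so your claimed $\mathcal{I}(T)$-bound does not follow from your own formula.) The correct version then closes exactly as you intend: requiring $c_1M^{p_{\mathrm{c}}(n)}\mathcal{I}(T)\leqslant M/2$ with $M=k_2\varepsilon$ forces $\mathcal{H}(\tau_T)-\mathcal{H}(k_2\varepsilon)\lesssim M^{1-p_{\mathrm{c}}(n)}=\,(k_2\varepsilon)^{-\frac{2\sigma}{n-\min\{2\delta,\sigma\}}}$, which is \eqref{LLLL-01} and yields \eqref{Lower-Bound-Est-Lifespan} after inverting $\mathcal{H}$. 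So the strategy is the paper's strategy; fix the exponent and drop the spurious $M$-power, and the bookkeeping then genuinely matches your final display.
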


\begin{theorem}\label{Thm-Upper-Bound}Let $\sigma\geqslant 1$ and $\delta\in [0,\sigma]$; suppose that  $\mu:[0,+\infty)\to[0,+\infty)$ is a modulus of continuity fulfilling the assumptions $(A_3)$ and $(A_4)$; assume also that the Cauchy data $u_0$ and $u_1$  satisfy the same assumptions of Theorem \ref{Thm-Blow-up}. Thus,  there exists $\bar\varepsilon>0$ (depending only on $\ml H$, $u_0$ and $u_1$) such that, for any $\varepsilon\in (0, \bar\varepsilon)$, if there exists a weak solution $u$ to \eqref{Sigma-Evolution-Modulus} in $ L_{\mathrm{loc}}^{p_{\mathrm{c}}(n)}\left([0,T), L^{p_{\mathrm{c}}(n)}(\mb{R}^n;\langle x\rangle^{-q_0}\,\mathrm{d}x)\right)$ according to Definition \ref{Defn-Weak-Solution}, then $T$ satisfies the following upper bound estimate:
	\begin{align}\label{Upper-Bound-Lifespan}
		T\leqslant T_{\varepsilon,u}:= \widetilde{K}\varepsilon^{\frac{\kappa}{n-\min\{2\delta,\sigma\}}}\left[\ml{H}^{-1}\left(\widetilde{k}_1\varepsilon^{-\frac{2\sigma}{n-\min\{2\delta,\sigma\}}}+\ml{H}(\widetilde{k}_2\varepsilon)\right)\right]^{-\frac{\kappa}{n-\min\{2\delta,\sigma\}}},
	\end{align}
	for some constants $\widetilde{k}_1,\widetilde{k}_2,\widetilde{K}>0$, independent of $\varepsilon$.
\end{theorem}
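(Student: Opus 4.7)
I would argue by contradiction: suppose a weak solution $u$ exists on $[0,T)$ with $T>T_{\varepsilon,u}$, and derive an inequality incompatible with the non-Dini condition $(A_4)$. Starting from the identity \eqref{Equality-Weak-Solution}, I would test against a scaled pair $(\rho,\varphi_R)$. The test function $\varphi_R$ should belong to $\mathfrak{X}_{q_0}([0,T)\times\mb{R}^n)$; a natural choice, consistent with the example after Definition~\ref{Defn-test-space}, is
\[
\varphi_R(t,x):=\bigl(1+t^{2\theta}R^{-2\theta\kappa}+|x|^{2\theta\kappa}R^{-2\theta\kappa}\bigr)^{-r_0},
\]
with $\theta\geqslant\max\{1,([\sigma]+1)/\kappa\}$ and $r_0\geqslant q_0/(2\theta\kappa)$. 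I would pair it with $\rho\in\ml{C}^2_0([0,T))$ equal to $1$ on $[0,R^\kappa/2]$; in the damped case $\delta>0$, I would include an exponential factor $e^{\bar c t}$ as in the proof of Theorem~\ref{Thm-Blow-up} in order to neutralize the first-order $(-\Delta)^\delta\partial_t$ contribution in the adjoint and to make positive use of the sign assumption \eqref{eq:data_sign_delta>0}. The scale $R$ is a free parameter, to be ultimately linked to $T$.

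\textbf{Upper bound via the refined test function method.} Writing $L^*:=\partial_t^2+(-\Delta)^\sigma-(-\Delta)^\delta\partial_t$ and
\[
U(R):=\int_0^T\!\!\rho(t)\!\int_{\mb{R}^n}g(u(t,x))\,\varphi_R(t,x)\,\mathrm{d}x\,\mathrm{d}t,
\]
I would plug $(\rho,\varphi_R)$ into \eqref{Equality-Weak-Solution} and apply a Young-type inequality adapted to the convex--conjugate pair $(g,g^*)$ guaranteed by $(A_3)$: namely $|u\,v|\leqslant\tfrac12 g(|u|)+g^*(2|v|)$, with $v=L^*(\rho\varphi_R)/(\rho\varphi_R)$. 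The pointwise control of $L^*(\rho\varphi_R)/(\rho\varphi_R)$ coming from Lemma~\ref{Lem-Varphi-Fractional-Derivatives} is of order $R^{-2\sigma}$ on the support of $\rho\varphi_R$, so absorbing $\tfrac12 g(|u|)$ on the left yields
\[
U(R)\;\leqslant\;C\varepsilon\,I_0(u_0,u_1)\;+\;C\,R^{n+\kappa}\,g^*(C R^{-2\sigma}).
\]
The key analytical point is that for $g(s)=s^{p_{\mathrm c}(n)}\mu(s)$ with $\mu$ a modulus of continuity, the Legendre conjugate $g^*$ is naturally expressed through the inverse of $s\mapsto s^{p_{\mathrm c}(n)-1}\mu(s)$, and hence through $\ml{H}^{-1}$; this translates the previous line into a quantitative version of \eqref{LLLL-01}, involving $\ml H^{-1}(CR^{-2\sigma})$.

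\textbf{Lower bound via Jensen, and inversion.} From the convexity of $g$, Jensen's inequality against the probability measure $\rho\varphi_R/\lVert\rho\varphi_R\rVert_{L^1_{t,x}}$ gives
\[
U(R)\;\geqslant\;\lVert\rho\varphi_R\rVert_{L^1_{t,x}}\;g\!\left(\lVert\rho\varphi_R\rVert_{L^1_{t,x}}^{-1}\,\ml{M}(R)\right),\qquad \ml{M}(R):=\int_0^T\!\!\rho(t)\!\int u\,\varphi_R\,\mathrm{d}x\,\mathrm{d}t.
\]
A separate application of \eqref{Equality-Weak-Solution} with an auxiliary test function that is essentially constant in $x$ on $\mathrm{supp}(\varphi_R)$, together with the sign conditions \eqref{eq:data_sign_delta=0}--\eqref{eq:data_sign_delta>0}, gives the positivity-in-mean estimate $\ml{M}(R)\gtrsim\varepsilon\,\lVert\rho\varphi_R\rVert_{L^1_{t,x}}$ for $\varepsilon$ small (so that $\ml H(c\varepsilon)$ lies in the range of $\ml H$), which yields $U(R)\gtrsim \lVert\rho\varphi_R\rVert_{L^1_{t,x}}\,g(c\varepsilon)\asymp R^{n+\kappa}g(c\varepsilon)$. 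Combining with the previous upper bound produces \eqref{Contradiction-01}, which in schematic form reads
\[
g(c\varepsilon)\;\leqslant\;C\varepsilon R^{-\kappa}\;+\;C\,g^*(CR^{-2\sigma}).
\]
Solving this for $R$ through $\ml H$ (using that $\ml H$ is strictly decreasing with $\ml H(0^+)=+\infty$ by $(A_4)$, so that $\ml H^{-1}$ is well defined) gives a quantitative upper bound on $R$ of exactly the form on the right-hand side of \eqref{Upper-Bound-Lifespan}, after setting $R^\kappa\asymp T$ and tracking the powers of $\varepsilon$ through the $g^*$-analysis. This contradicts the assumption $T>T_{\varepsilon,u}$.

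\textbf{Main obstacle.} The heart of the difficulty is that $\sigma$ and $\delta$ need not be integers, so $(-\Delta)^\sigma$ and $(-\Delta)^\delta$ are non-local and do not preserve compact support; in particular, the naive computation $|(-\Delta)^{\bar\sigma}\psi^2|\leqslant 2|\psi\,(-\Delta)^{\bar\sigma}\psi|$ used with classical cone-type cutoffs (as in \cite{D'Abbicco-Fujiwara=2021,Ebert-Girardi-Reissig=2020}) fails. This forces the use of the tailor-made class $\mathfrak{X}_{q_0}$ and of the specific self-similar profile $\varphi_R$ above, for which Lemma~\ref{Lem-Varphi-Fractional-Derivatives} supplies the necessary sharp pointwise control of $(-\Delta)^{\bar\sigma}\varphi_R$ by $R^{-2\bar\sigma}\varphi_R$ on \emph{all} of $\mb R^n$, not merely on $\mathrm{supp}(\varphi_R)$. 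Correctly identifying how $\ml H^{-1}$ emerges from the Legendre transform of $g$ and balancing the two terms in the contradiction inequality so that both exponents in \eqref{Upper-Bound-Lifespan} match those produced in Theorem~\ref{Thm-Lower-Bound} is the final delicate point.
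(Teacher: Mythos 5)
Your overall framework — scaled test functions in the class $\mathfrak{X}_{q_0}$, exploiting Lemma~\ref{Lem-Varphi-Fractional-Derivatives} to control the fractional Laplacians pointwise, Jensen's inequality from $(A_3)$, and a contradiction with the non-Dini condition — matches the paper's skeleton. But the central technical step is wrong, and it is exactly the step that makes the critical case work.

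\textbf{The Legendre-conjugate route fails at the critical exponent.} You propose $|u\,v|\leqslant\tfrac12 g(|u|)+g^*(2|v|)$ to absorb $g(|u|)$ and reduce to $U(R)\lesssim C\varepsilon + R^{\cdot}g^*(CR^{-\cdot})$. Checking the exponents (and note the correct scaling: $\|\rho\varphi_R\|_{L^1_{t,x}}\asymp R^{1+n/\kappa}$, and $(-\Delta)^\sigma\psi_R$ scales like $R^{-2\sigma/\kappa}$, not $R^{n+\kappa}$ and $R^{-2\sigma}$), one finds for the pure critical power $g(s)=s^{p_{\mathrm c}(n)}$ that $g^*(v)\sim v^{p'_{\mathrm c}(n)}$ and $1+\tfrac n\kappa - \tfrac{2\sigma p'_{\mathrm c}(n)}{\kappa}=0$. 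The $g^*$-term is $O(1)$ in $R$, so Young's inequality gives only $U(R)\lesssim C$, a bounded quantity: no contradiction and no lifespan estimate. This is precisely the well-known failure of the classical test function method at the critical exponent. Your Jensen lower bound does not rescue this: even granting a positivity-in-mean estimate, the correct scaling of $u$ (by the decay estimate \eqref{Est-L^q}, $\|u(t,\cdot)\|_{L^\infty}\lesssim\varepsilon(1+t)^{-(n-\min\{2\delta,\sigma\})/\kappa}$) forces $\ml M(R)\lesssim\varepsilon R^{-(n-\min\{2\delta,\sigma\})/\kappa}\|\rho\varphi_R\|_{L^1_{t,x}}$, and with this scaling the Jensen lower bound on $U(R)$ also reduces to a constant in $R$. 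Your claim $\ml M(R)\gtrsim\varepsilon\|\rho\varphi_R\|_{L^1_{t,x}}$ is off by the full decay factor and cannot be derived from the weak formulation.

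\textbf{$g^*$ is not $\ml H^{-1}$.} The function $\ml H$ is the integral $\int_\tau^{\tau_0}\mu(\varrho)\varrho^{-1}\,\mathrm d\varrho$, whereas the Legendre conjugate $g^*$ involves inverting $g'(s)\sim s^{p_{\mathrm c}(n)-1}\mu(s)$; these are genuinely different operations, and no algebraic identity turns a $g^*$-bound into the $\ml H^{-1}$-bound in \eqref{Upper-Bound-Lifespan}. The paper never uses $g^*$. What produces $\ml H$ is a \emph{logarithmic averaging over scales}: one introduces the auxiliary profile $\Phi$ with exponents \eqref{beta-0-1}, uses Jensen at each fixed scale $r$ to produce $g^{-1}$ of the quantity $y(r):=\int g(u)\Phi_r\psi_r$, and then works with the functional $Y(R)=\int_0^R y(r)\,r^{-1}\,\mathrm d r$. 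Because $Y(R)\lesssim I_R$ and $Y'(R)R=y(R)$, the weak formulation plus Jensen yields the nonlinear ODE \eqref{Crucial-05}, namely $Y'(R)\gtrsim R^{n/\kappa}g\big[R^{-(n-\min\{2\delta,\sigma\})/\kappa}(C_5 Y(R)+C_{u_0,u_1}\varepsilon)\big]$. Dividing by $\widetilde Y(R)^{p_{\mathrm c}(n)}$ and integrating over $[1,R]$ against $\mathrm d R/R$ is what generates the $\ml H$-terms on the right-hand side of \eqref{Est-widetilde-YR}, and hence the bound \eqref{Upper-Bound-Lifespan}. This is the Ikeda--Sobajima improved test function method you cite in your last paragraph but do not actually implement. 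Without the $Y(R)=\int_0^R y(r)\,r^{-1}\,\mathrm d r$ construction the argument stalls at the critical case.

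\textbf{Minor points.} The exponential weight $e^{\bar c t}$ you propose for $\delta>0$ does not appear in the paper's proof; the sign condition \eqref{eq:data_sign_delta>0} is instead used by taking $R$ large enough that $c_\delta R^{-2\delta/\kappa}\leqslant\bar c$, which makes $C^{(R)}_{u_0,u_1}$ positive uniformly in $R\geqslant R_\delta$, and the smallness of $\bar\varepsilon$ in the statement is precisely what guarantees (via Theorem~\ref{Thm-Lower-Bound}) that $T_\varepsilon>R_\delta$.
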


Summarizing, if the assumptions of Theorem \ref{Thm-Lower-Bound} and Theorem \ref{Thm-Upper-Bound} are satisfied, then problem \eqref{Sigma-Evolution-Modulus} admits a local solution defined on  $[0,T_{\varepsilon})$ if, and only if, $T_{\varepsilon}$ satisfies $T_{\varepsilon,\ell}\leqslant T_{\varepsilon} \leqslant T_{\varepsilon,u}$; as a consequence, one  concludes the sharp estimates \eqref{Sharp-Lifespan-Est} for the lifespan $T_{\varepsilon}$. 

\subsection{Some examples for the sharp lifespan estimates}\label{Sub-section-Examples-lifespan}
\hspace{5mm}This section provides some examples of sharp lifespan estimates \eqref{Sharp-Lifespan-Est} when $\mu$ is a modulus of continuity such that $\ml{H}(0)=+\infty$.

We begin by deducing a rougher estimate than the one in \eqref{Sharp-Lifespan-Est}. Later, we will use it in some cases instead of \eqref{Sharp-Lifespan-Est} to better emphasize the dominant $\varepsilon$-dependent term in the sharp estimates of $T_{\varepsilon}$. 
Recalling $\tau_0<1$, we notice that 
\begin{align*}
	\lim\limits_{\varepsilon\to 0^+}\frac{\ml{H}(c\,\varepsilon)}{C\varepsilon^{-\frac{2\sigma}{n-\min\{2\delta,\sigma\}}}}&=C^{-1} \lim\limits_{\varepsilon\to 0^+} \varepsilon^{\frac{2\sigma}{n-\min\{2\delta,\sigma\}}}\int_{c\,\varepsilon}^{\tau_0}\frac{\mu(\varrho)}{\varrho}\,\mathrm{d}\varrho\\
	& \leqslant  C^{-1}\max_{\tau\in [0,\tau_0]}\{\mu(\tau)\} \lim\limits_{\varepsilon\to 0^+} \big( \varepsilon^{\frac{2\sigma}{n-\min\{2\delta,\sigma\}}}(-\ln(c\,\varepsilon))\big)=0.
\end{align*}
Therefore, in \eqref{Sharp-Lifespan-Est} the term $\varepsilon^{-\frac{2\sigma}{n-\min\{2\delta,\sigma\}}}$ is dominant in comparison with $\ml{H}(c\,\varepsilon)$ for $0<\varepsilon\ll 1$, which allows us to reduce it into
\begin{align}\label{Lifespan-Approximation}
	T_{\varepsilon}\approx\varepsilon^{\frac{\kappa}{n-\min\{2\delta,\sigma\}}}\left[\ml{H}^{-1}\left(2C\varepsilon^{-\frac{2\sigma}{n-\min\{2\delta,\sigma\}}}\right)\right]^{-\frac{\kappa}{n-\min\{2\delta,\sigma\}}}
\end{align}
for $\varepsilon\in(0,\varepsilon_0]$ with a sufficiently small constant $\varepsilon_0>0$.

We consider two typical modulus of continuity (weaker than any H\"older's modulus of continuity) in Example \ref{Example-Dini-critical}, i.e. (1MoC) and (2MoC), whose corresponding auxiliary functions $\ml{H}(\tau)$ and $\ml{H}^{-1}(\omega)$ have been explicitly evaluated in \cite[Section 5]{Chen-Palmieri=2024}. Here, we omit the details.
\begin{exam}
	Let $\mu(\tau)=(\ln\frac{1}{\tau})^{-\gamma}$ for any $\tau\in (0,\tau_0]$ with $\gamma\in(0,1]$; by employing \eqref{Lifespan-Approximation} for $\gamma\in (0,1)$ and \eqref{Sharp-Lifespan-Est} for $\gamma=1$ we obtain
	\begin{align*}
		T_{\varepsilon}\approx
		\begin{cases}
			\displaystyle{ \varepsilon^{\frac{\kappa}{n-\min\{2\delta,\sigma\}}}\exp\left(C\varepsilon^{-\frac{2\sigma}{(n-\min\{2\delta,\sigma\})(1-\gamma)}}\right)}&\mbox{if}\ \ \gamma\in(0,1),\\[0.8em]
			\displaystyle{ \varepsilon^{\frac{\kappa}{n-\min\{2\delta,\sigma\}}}\exp\left[c\left(\ln\frac{1}{\varepsilon}\right)\exp\left( C\varepsilon^{-\frac{2\sigma}{n-\min\{2\delta,\sigma\}}}\right)\right]}&\mbox{if}\ \ \gamma=1,
		\end{cases}
	\end{align*}
	where the positive constants $c$ and $C$ depend on $\tau_0,n,\sigma,\delta,\gamma$ only. We underline that as $\gamma\to0^+$ the value of $T_\varepsilon$ tends to the  sharp lifespan estimate for the semilinear evolution equations \eqref{Sigma-Evolution-Power} with the critical power nonlinearity $|u|^{p_{\mathrm{c}}(n)}$, explicitly evaluated in Proposition \ref{Coro-Classical-sigma-evolution}.
\end{exam}

\begin{exam}
	Let us suppose
	\begin{equation}
		\label{eq:log-iterations}
		\mu(\tau)=\bigg(\ln^{[k]}\frac{1}{\tau}\bigg)^{-\gamma}\,\prod\limits_{j=1}^{k-1}\bigg(\ln^{[j]}\frac{1}{\tau}\bigg)^{-1} \quad \text{for any} \quad \tau\in (0,\tau_0], 
	\end{equation}
	for some $k\geqslant 2$ and  $\gamma\in(0,1]$; then employing \eqref{Lifespan-Approximation} for $\gamma\in (0,1)$ and \eqref{Sharp-Lifespan-Est} for $\gamma=1$ we obtain
	\begin{align*}
		T_{\varepsilon}\approx
		\begin{cases}
			\displaystyle{ \varepsilon^{\frac{\kappa}{n-\min\{2\delta,\sigma\}}}\exp\left[c\exp^{[k-1]}\left( C\varepsilon^{-\frac{2\sigma}{(n-\min\{2\delta,\sigma\})(1-\gamma)}}\right)\right]}&\mbox{if}\ \ \gamma\in(0,1),\\[0.8em]
			\displaystyle{  \varepsilon^{\frac{\kappa}{n-\min\{2\delta,\sigma\}}}\exp\left\{c \exp^{[k-1]}\left[\left(\ln^{[k]}\tfrac{1}{\varepsilon}\right)\exp\left(C\varepsilon^{-\frac{2\sigma}{n-\min\{2\delta,\sigma\}}}\right)\right]\right\}}&\mbox{if}\ \ \gamma=1,
		\end{cases}
	\end{align*}
	where the positive constants $c$ and $C$ depend on $\tau_0,n,\sigma,\delta,\gamma$ only. It shows that the lifespan becomes longer if one considers a larger number $k$ of iterations in the definition \eqref{eq:log-iterations} of $\mu$.
\end{exam}

\section{Existence results and sharp lower bound estimates of the lifespan}\setcounter{equation}{0}\label{Section-Existence}
\subsection{Philosophy of our proofs}\label{Subsection-Philosophy}
\hspace{5mm}Before proving the local/global in time existence results and the sharp lower bound estimates of the lifespan $T_{\varepsilon}$, we address some preliminary results and explain our approach, whose philosophy also can be used in deriving sharp lower bound estimates of the lifespan for other related semilinear evolution models.

For $T>0$ we introduce the evolution space $X_T$ of solutions defined by $X_T:=\ml{C}([0,T],L^{p_{\mathrm{c}}(n)}\cap L^{\infty})$, with the critical exponent $p_{\mathrm{c}}(n)$ defined in \eqref{Critical-Exponent}, equipped with the time-weighted norm
\begin{align*}
	\|u\|_{X_T}:=\sup\limits_{t\in[0,T]}\left((1+t)^{\frac{1}{p_{\mathrm{c}}(n)}}\|u(t,\cdot)\|_{L^{p_{\mathrm{c}}(n)}}+(1+t)^{\frac{n-\min\{2\delta,\sigma\}}{\kappa}}\|u(t,\cdot)\|_{L^{\infty}}\right).
\end{align*}
Here, the time-dependent weights are strongly motivated by some known decay estimates for the solution to the corresponding linear Cauchy problem which will be recalled in Subsection \ref{Sub-section-detail} (see \cite{D'Abbicco-Ebert=2017,D'Abbicco-Ebert=2021,D'Abbicco-Ebert=2022, Pham-K-Reissig=2015} and references therein, according to the different values of $\delta$ and $\sigma$ ).

Let us consider the following nonlinear integral operator:
\begin{align*}
	\ml{N}:\ u(t,x)\in X_T\to \ml{N}[u](t,x):=\varepsilon u_{\lin}(t,x)+u_{\nlin}(t,x)
\end{align*}
for any $t\in[0,T]$ and $x\in\mb{R}^n$, where we define $u_{\lin}=u_{\lin}(t,x)$ by
\begin{align*}
	u_{\lin}(t,x):=\underbrace{\frac{\lambda_1(|D|)\,\mathrm{e}^{\lambda_2(|D|)t}-\lambda_2(|D|)\,\mathrm{e}^{\lambda_1(|D|)t}}{\lambda_1(|D|)-\lambda_2(|D|)}}_{=:K_0(t,|D|)}u_0(x)+\underbrace{\frac{\mathrm{e}^{\lambda_1(|D|)t}-\mathrm{e}^{\lambda_2(|D|)t}}{\lambda_1(|D|)-\lambda_2(|D|)}}_{=:K_1(t,|D|)}u_1(x)
\end{align*}
with $\lambda_{1,2}(|D|):=(-|D|^{2\delta}\pm\sqrt{|D|^{4\delta}-4|D|^{2\sigma}}\,)/2$, and $u_{\nlin}=u_{\nlin}(t,x)$ by
\begin{align*}
	u_{\nlin}(t,x):=\int_0^tK_1(t-\eta,|D|)\,g\big(u(\eta,x)\big)\,\mathrm{d}\eta
\end{align*}
with the nonlinearity $g$ introduced in \eqref{g-function}. By applying Duhamel's principle, we know that $u\in X_T$ is a solution to the semilinear Cauchy problem \eqref{Sigma-Evolution-Modulus} if, and only if, it is a fixed point for the operator $\ml N$.

The assumption $(u_0,u_1)\in\ml{A}_{\delta,\sigma}$ ensures that the solution to the linear problem associated to \eqref{Sigma-Evolution-Modulus} satisfies suitable long-time decay estimates, recalled in Subsection \ref{Sub-section-detail}, which allows to conclude $u_{\lin}\in X_T$ for any $T>0$ and to obtain the uniform estimate 
\begin{align}\label{linear-Data-A}
	\|u_{\lin}\|_{X_T}\leqslant C_0 \|(u_0,u_1)\|_{\ml{A}_{\delta,\sigma}},
\end{align}
for a constant $C_0>0$.

Our goal is to prove the existence of a unique fixed point  $u$ of the nonlinear integral operator $\ml{N}$ in the space $X_T$, which coincides with the unique solution to \eqref{Sigma-Evolution-Modulus} in $X_T$. To achieve our aim we will apply the Banach contraction principle, after proving that for any $u$ and $\tilde{u}$ in the set
\begin{align*}
	\mathfrak{B}_{\nu}(X_{T}):=\{w\in X_{T}:\|w\|_{X_T}\leqslant \nu\}\ \ \mbox{with}\ \ \nu:=2C_0\|(u_0,u_1)\|_{\ml{A}_{\delta,\sigma}}\varepsilon>0,
\end{align*} 
the following fundamental inequalities hold uniformly:
\begin{align}
	\|\ml{N}[u]\|_{X_T}&\leqslant  C_0\|(u_0,u_1)\|_{\ml{A}_{\delta,\sigma}}\varepsilon+C_1(\mu,\nu,T)\|u\|_{X_T}^{p_{\mathrm{c}}(n)},\label{Crucial-01}\\
	\|\ml{N}[u]-\ml{N}[\tilde{u}]\|_{X_T}&\leqslant C_1(\mu,\nu,T)\|u-\tilde{u}\|_{X_T}\left(\|u\|_{X_T}^{p_{\mathrm{c}}(n)-1}+\|\tilde{u}\|_{X_T}^{p_{\mathrm{c}}(n)-1}\right),\label{Crucial-02}
\end{align} 
for a suitable constant $C_1(\mu,\nu,T)>0$ depending only on $\nu$, $T$ and the modulus of continuity $\mu$. 
For simplicity, we denote
\begin{align*}
	\ml{W}_{[u,\tilde{u}],X_T}^{p_{\mathrm{c}}(n)}:=\|u-\tilde{u}\|_{X_T}\left(\|u\|_{X_T}^{p_{\mathrm{c}}(n)-1}+\|\tilde{u}\|_{X_T}^{p_{\mathrm{c}}(n)-1}\right).
\end{align*}

Notice that from the combination of \eqref{linear-Data-A} and \eqref{Crucial-02} with $\tilde{u}=0$ the desired inequality \eqref{Crucial-01} directly follows.

For the sake of readability, we state the philosophy of our proofs. In the forthcoming subsections we will demonstrate the crucial estimate
\begin{align}\label{Crucial-04}
	C_1(\mu,\nu,T)&\lesssim\int_0^{T}(1+\eta)^{-1}\mu\left(C_2(1+\eta)^{-\frac{n-\min\{2\delta,\sigma\}}{\kappa}}\nu\right)\mathrm{d}\eta\notag\\
	&\leqslant C_3 \int^{C_2\nu}_{C_2(1+T)^{-\frac{n-\min\{2\delta,\sigma\}}{\kappa}}\nu}\frac{\mu(\tau)}{\tau}\,\mathrm{d}\tau
\end{align}
with suitable constants $C_2,C_3>0$ independent of $T$ and $\nu$. We next separate our discussion according to the integrability of $\mu(\tau)/\tau$ over $[0,\tau_0]$. Here, we take $\tau_0=C_2\nu$.
\begin{description}
	\item[(a) The Dini condition holds.] By using the Dini condition \eqref{Dini-Condition} we can pick $\varepsilon$ (and then $\nu$ being sufficiently small) such that
	\begin{align*}
		C_1(\mu,\nu,T)\leqslant C_3 \int^{C_2\nu}_{0}\frac{\mu(\tau)}{\tau}\,\mathrm{d}\tau\leqslant \frac{1}{4}\nu^{1-p_{\mathrm{c}}(n)};
	\end{align*}
	indeed, taking $\nu\downarrow 0$ the integral term becomes arbitrarily small, instead the power $\nu^{1-p_{\mathrm{c}}(n)}$ tends to $+\infty$ being $p_{\mathrm{c}}(n)>1$.\\
	Therefore, the coefficient $C_1(\mu,\nu,T)$ estimated in \eqref{Crucial-04} is uniformly bounded with respect to $T>0$.
	Then, by taking $T=+\infty$, \eqref{Crucial-01} and \eqref{Crucial-02} imply, respectively,
	\begin{align}\label{star-01}
		\|\ml{N}[u]\|_{X_{+\infty}}\leqslant \frac{3}{4}\nu\ \ \mbox{and}\ \ \|\ml{N}[u]-\ml{N}[\tilde{u}]\|_{X_{+\infty}}\leqslant \frac{1}{2}\|u-\tilde{u}\|_{X_{+\infty}}
	\end{align}
	for any $u,\tilde{u}\in\mathfrak{B}_{\nu}(X_{+\infty})$. As a consequence, applying the Banach fixed point theorem, it follows the existence of a unique  global in time solution $u\in\mathfrak{B}_{\nu}(X_{+\infty})$. As a byproduct, we also find
	\begin{align*}
		\sup\limits_{t\in[0,+\infty]}\left(\sum\limits_{q=p_{\mathrm{c}}(n),+\infty}(1+t)^{\frac{1}{\kappa}\left(n-\min\{2\delta,\sigma\}-\frac{n}{q}\right)}\|u(t,\cdot)\|_{L^q}\right)\lesssim\varepsilon\|(u_0,u_1)\|_{\ml{A}_{\delta,\sigma}}.
	\end{align*}
	The interpolation between $L^{p_{\mathrm{c}}(n)}$ and $L^{\infty}$ shows the desired estimate \eqref{Est-L^q}.
	\item[(b) The non-Dini condition holds.] Recalling the definition of the function $\ml{H}$ in \eqref{H-function}, one may rewrite \eqref{Crucial-04} as
	\begin{align*}
		C_1(\mu,\nu,T)\leqslant C_3\left[\ml{H}\left(C_2(1+T)^{-\frac{n-\min\{2\delta,\sigma\}}{\kappa}}\nu \right)-\ml{H}(C_2\nu)\right].
	\end{align*}
	Let us introduce the following three positive constants independent of $\varepsilon$:
	\begin{align*}
		k_1:=(4C_3)^{-1}\left(2C_0\|(u_0,u_1)\|_{\ml{A}_{\delta,\sigma}}\right)^{1-p_{\mathrm{c}}(n)}, \ \ k_2:=2C_0C_2\|(u_0,u_1)\|_{\ml{A}_{\delta,\sigma}},\ \ K:=k_2^{\frac{\kappa}{n-\min\{2\delta,\sigma\}}}.
	\end{align*}
	Plugging the previous inequality into \eqref{Crucial-01} and \eqref{Crucial-02}, similarly to \eqref{star-01} we find that $\ml{N}$ is a contraction on the ball $\mathfrak{B}_{\nu}(X_T)$ for any $T>0$ satisfying
	\begin{align}\label{LLLL-01}
		&\ml{H}\left(C_2(1+T)^{-\frac{n-\min\{2\delta,\sigma\}}{\kappa}}\nu \right)-\ml{H}(C_2\nu)\leqslant (4C_3)^{-1}\nu^{1-p_{\mathrm{c}}(n)}\notag \\
		&\qquad\Rightarrow \ml{H}\left(k_2\varepsilon(1+T)^{-\frac{n-\min\{2\delta,\sigma\}}{2\kappa}}\right)\leqslant k_1\varepsilon^{1-p_{\mathrm{c}}(n)}+\ml{H}(k_2\varepsilon)\notag\\
		&\qquad\Rightarrow k_2\varepsilon(1+T)^{-\frac{n-\min\{2\delta,\sigma\}}{\kappa}}\geqslant \ml{H}^{-1}\left(k_1\varepsilon^{-\frac{2\sigma}{n-\min\{2\delta,\sigma\}}}+\ml{H}(k_2\varepsilon)\right)\notag\\
		&\qquad\Rightarrow T\leqslant K\varepsilon^{\frac{\kappa}{n-\min\{2\delta,\sigma\}}}\left[\ml{H}^{-1}\left(k_1\varepsilon^{-\frac{2\sigma}{n-\min\{2\delta,\sigma\}}}+\ml{H}(k_2\varepsilon)\right)\right]^{-\frac{\kappa}{n-\min\{2\delta,\sigma\}}}.
	\end{align}
	Thus, for any $T>0$ satisfying \eqref{LLLL-01} the application of Banach fixed point argument allows to obtain the existence of a unique local in time solution $u\in\mathfrak{B}_{\nu}(X_{T})$; hence, the lower bound estimate \eqref{Lower-Bound-Est-Lifespan} for the lifespan $T_{\varepsilon}$ has been proved.
\end{description}

\noindent In conclusion, in order to complete the proof of the global in time existence result in Theorem \ref{Thm-GESDS} and the sharp lower bound estimate of the lifespan  in Theorem \ref{Thm-Lower-Bound}, it remains to demonstrate the crucial estimate \eqref{Crucial-04} under suitable conditions on $\delta,\sigma,n$.

In order to estimate the $L^r$ norm of the nonlinearity $g$ we will employ the following lemma; in its proof the assumption $(A_1)$ plays a crucial role (see also Remark \ref{rem:A1}) .
\begin{lemma}\label{Lemma-Nonlinearity} Assume that the nonlinearity $g=g(\tau)$ defined by \eqref{g-function} satisfies $g\in\ml{C}^1(\mb{R}_+)$ and there exists $\tau_1>0$ such that
	\begin{align}\label{Assumption-Nonlinearity}
		|g'(\tau)|\lesssim |\tau|^{p_{\mathrm{c}}(n)-1}\mu(|\tau|) \ \ \mbox{for any}\ \ |\tau|\in(0,\tau_1).
	\end{align}
	Then, if $|u|+|\tilde{u}|<\tau_1$ the inequality
	\begin{align*}
		\left\|g\big(u(\eta,\cdot)\big)-g\big(\tilde{u}(\eta,\cdot)\big)\right\|_{L^r}&\lesssim \mu\big(\|u(\eta,\cdot)\|_{L^{\infty}}+\|\tilde{u}(\eta,\cdot)\|_{L^{\infty}}\big)\|u(\eta,\cdot)-\tilde{u}(\eta,\cdot)\|_{L^{rp_{\mathrm{c}}(n)}}\\
		&\quad\times\left(\|u(\eta,\cdot)\|_{L^{rp_{\mathrm{c}}(n)}}^{p_{\mathrm{c}}(n)-1}+\|\tilde{u}(\eta,\cdot)\|_{L^{rp_{\mathrm{c}}(n)}}^{p_{\mathrm{c}}(n)-1}\right)
	\end{align*}
	holds true for any $r\geqslant 1$.
\end{lemma}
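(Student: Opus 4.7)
\smallskip

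\noindent\emph{Proof proposal.} The plan is to combine a mean-value representation of the increment $g(u)-g(\tilde u)$ with the pointwise control provided by the hypothesis \eqref{Assumption-Nonlinearity} and a standard Hölder inequality.

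First, I would write, at each point $(\eta,x)$ with $|u|+|\tilde u|<\tau_1$,
\begin{align*}
g\big(u(\eta,x)\big)-g\big(\tilde u(\eta,x)\big)
=\big(u(\eta,x)-\tilde u(\eta,x)\big)\int_0^1 g'\big(\tilde u(\eta,x)+\theta(u(\eta,x)-\tilde u(\eta,x))\big)\,\mathrm{d}\theta.
\end{align*}
For every $\theta\in[0,1]$ the argument $v_\theta:=\tilde u+\theta(u-\tilde u)$ satisfies $|v_\theta|\leqslant |u|+|\tilde u|<\tau_1$, so the assumption \eqref{Assumption-Nonlinearity} gives $|g'(v_\theta)|\lesssim |v_\theta|^{p_{\mathrm{c}}(n)-1}\mu(|v_\theta|)$. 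Using the monotonicity of $\mu$ together with $|v_\theta|\leqslant \|u(\eta,\cdot)\|_{L^\infty}+\|\tilde u(\eta,\cdot)\|_{L^\infty}$, and the elementary estimate $(a+b)^{p_{\mathrm{c}}(n)-1}\lesssim a^{p_{\mathrm{c}}(n)-1}+b^{p_{\mathrm{c}}(n)-1}$ valid for $a,b\geqslant 0$ since $p_{\mathrm{c}}(n)>1$, I obtain the pointwise bound
\begin{align*}
\big|g\big(u(\eta,x)\big)-g\big(\tilde u(\eta,x)\big)\big|
\lesssim \mu\!\left(\|u(\eta,\cdot)\|_{L^\infty}+\|\tilde u(\eta,\cdot)\|_{L^\infty}\right)
\big|u(\eta,x)-\tilde u(\eta,x)\big|\left(|u(\eta,x)|^{p_{\mathrm{c}}(n)-1}+|\tilde u(\eta,x)|^{p_{\mathrm{c}}(n)-1}\right).
\end{align*}

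Next, I would take the $L^r$ norm in $x$ and distribute it via Hölder's inequality with conjugate exponents $p_{\mathrm{c}}(n)$ and $p_{\mathrm{c}}(n)/(p_{\mathrm{c}}(n)-1)$, applied to each of the two terms coming from the splitting. For the first term this gives
\begin{align*}
\big\||u(\eta,\cdot)-\tilde u(\eta,\cdot)|\,|u(\eta,\cdot)|^{p_{\mathrm{c}}(n)-1}\big\|_{L^r}
\leqslant \|u(\eta,\cdot)-\tilde u(\eta,\cdot)\|_{L^{rp_{\mathrm{c}}(n)}}\,\|u(\eta,\cdot)\|_{L^{rp_{\mathrm{c}}(n)}}^{p_{\mathrm{c}}(n)-1},
\end{align*}
and analogously for the term containing $|\tilde u|^{p_{\mathrm{c}}(n)-1}$. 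Summing the two contributions and factoring out the modulus of continuity (which does not depend on $x$) produces precisely the claimed inequality.

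I do not expect a genuine obstacle here; the only point that merits some care is the splitting step, where one must ensure that the factor $|v_\theta|^{p_{\mathrm{c}}(n)-1}$ inside the integral is controlled by $|u|^{p_{\mathrm{c}}(n)-1}+|\tilde u|^{p_{\mathrm{c}}(n)-1}$ and that $\mu(|v_\theta|)$ is replaced by $\mu(\|u\|_{L^\infty}+\|\tilde u\|_{L^\infty})$ \emph{before} taking the spatial integral, so that the latter can be pulled out of the $L^r$ norm. Once this is done, the Hölder step is routine and the claim follows for every $r\geqslant 1$.
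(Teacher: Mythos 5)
Your proposal is correct and follows essentially the same route as the paper: a mean-value (or fundamental-theorem-of-calculus) representation of $g(u)-g(\tilde u)$ along the segment $\theta\mapsto\tilde u+\theta(u-\tilde u)$, the pointwise bound from \eqref{Assumption-Nonlinearity} together with the monotonicity of $\mu$, the elementary convexity bound $(a+b)^{p_{\mathrm{c}}(n)-1}\lesssim a^{p_{\mathrm{c}}(n)-1}+b^{p_{\mathrm{c}}(n)-1}$, and then H\"older's inequality with exponents $p_{\mathrm{c}}(n)$ and $p_{\mathrm{c}}(n)'$ scaled to $L^r$. The only cosmetic difference is that you make the splitting of $|v_\theta|^{p_{\mathrm{c}}(n)-1}$ explicit, while the paper states the resulting pointwise estimate directly; the content is the same.
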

\begin{proof}
	We can write
	\begin{align*}
		g\big(u(\eta,x)\big)-g\big(\tilde{u}(\eta,x)\big)&=\int_0^1\frac{\partial}{\partial\omega}\left[g\big(\omega u(\eta,x)+(1-\omega)\tilde{u}(\eta,x)\big)\right]\mathrm{d}\omega\\
		&=\int_0^1g'\big(\omega u(\eta,x)+(1-\omega)\tilde{u}(\eta,x)\big)\,\mathrm{d}\omega\,\big(u(\eta,x)-\tilde{u}(\eta,x)\big).
	\end{align*}
	Then, by the  increasing monotonicity of $\mu$ and the assumption \eqref{Assumption-Nonlinearity}, we obtain
	\begin{align*}
		&\big|g\big(u(\eta,x)\big)-g\big(\tilde{u}(\eta,x)\big) \big|\lesssim \int_0^1\left| g'\big( \omega u(\eta,x)+(1-\omega)\tilde{u}(\eta,x)\big)\right| \mathrm{d}\omega\,|u(\eta,x)-\tilde{u}(\eta,x)|\\
		&\qquad\lesssim\mu\big(|u(\eta,x)|+|\tilde{u}(\eta,x)|\big)\,|u(\eta,x)-\tilde{u}(\eta,x)| \left(|u(\eta,x)|^{p_{\mathrm{c}}(n)-1}+|\tilde{u}(\eta,x)|^{p_{\mathrm{c}}(n)-1}\right),
	\end{align*}
	and
	\begin{align}
		\label{eq:mu_Linfty}
		\big\|\mu\big(|u(\eta,\cdot)|+|\tilde{u}(\eta,\cdot)|\big)\big\|_{L^{\infty}}\leqslant\mu\big(\|u(\eta,\cdot)\|_{L^{\infty}}+\|\tilde{u}(\eta,\cdot)\|_{L^{\infty}}\big).
	\end{align}
	Thus, applying H\"older's inequality we obtain
	\begin{align*}
		\left\|g\big(u(\eta,\cdot)\big)-g\big(\tilde{u}(\eta,\cdot)\big)\right\|_{L^r}
		&\lesssim \big\|\mu\big(|u(\eta,\cdot)|+|\tilde{u}(\eta,\cdot)|\big)\big\|_{L^{\infty}} \left\|u(\eta,\cdot)-\tilde{u}(\eta,\cdot)\right\|_{L^{rp_{\mathrm{c}}(n)}}\\
		&\quad\times\left(\|\,|u(\eta,\cdot)|^{p_{\mathrm{c}}(n)-1}\|_{L^{\frac{rp_{\mathrm{c}}(n)}{p_{\mathrm{c}}(n)-1}}}+\|\,|\tilde{u}(\eta,\cdot)|^{p_{\mathrm{c}}(n)-1}\|_{L^{\frac{rp_{\mathrm{c}}(n)}{p_{\mathrm{c}}(n)-1}}}\right)
	\end{align*}
	for any $r\geqslant 1$, which immediately implies our desired estimate thanks to the inequality \eqref{eq:mu_Linfty}.
\end{proof}

\subsection{Proofs of Theorems \ref{Thm-GESDS} and  \ref{Thm-Lower-Bound}}\label{Sub-section-detail}
\hspace{5mm}In what follows we will prove the crucial estimate \eqref{Crucial-04}, under suitable restrictions on the dimension $n$, discussing separately the effective case $\delta\in[0,\sigma/2)$, the limit case $\delta=\sigma/2$, and the non-effective case $\delta\in(\sigma/2,\sigma]$; as a consequence, we will obtain the proof of Theorems \ref{Thm-GESDS} and \ref{Thm-Lower-Bound}. The proof of \eqref{Crucial-04} deeply relies on the application of long-time decay estimates for the solution to the following linear problem associated to \eqref{Sigma-Evolution-Modulus}:
\begin{align}\label{eq:CPlinear}
	\begin{cases}
		u_{tt}+(-\Delta)^{\sigma}u+(-\Delta)^{\delta}u_t=0,&x\in\mb{R}^n,\ t>0,\\
		u(0,x)=\varepsilon u_0(x),\ u_t(0,x)=\varepsilon u_1(x),&x\in\mb{R}^n,
	\end{cases}
\end{align}
already investigated in \cite{D'Abbicco-Ebert=2017,D'Abbicco-Ebert=2021,D'Abbicco-Ebert=2022, Pham-K-Reissig=2015} and some references therein, according to the different values of $\delta$ and $\sigma$.
\paragraph{\large The Effective Case $\delta\in[0,\sigma/2)$}
The estimates of the $L^{p_{\mathrm{c}}(n)}$ and $L^{\infty}$ norms of the solution $u_{\text{lin}}$ to \eqref{eq:CPlinear}, localized at small frequencies, can be found in \cite[Proposition 4.1]{D'Abbicco-Ebert=2017}. Moreover, we can derive an estimate of the  $L^{p_{\mathrm{c}}(n)}$ norm of the solution to \eqref{eq:CPlinear}, localized at large frequencies, employing \cite[Proposition 4.2]{D'Abbicco-Ebert=2017} via the fractional Gagliardo-Nirenberg inequality (thanks to $p_{\mathrm{c}}(n)> 2$), and an estimate of the $L^{\infty}$ norm via the fractional Sobolev embedding for $n<2\sigma$. Summarizing we may estimate
\begin{align}
	\|u_{\lin}(t,\cdot)\|_{L^{p_{\mathrm{c}}(n)}}&\lesssim (1+t)^{-\frac{1}{p_{\mathrm{c}}(n)}}\|(u_0,u_1)\|_{(H^{\sigma}\cap L^1)\times(L^2\cap L^1)},\label{star-02}\\
	\|u_{\lin}(t,\cdot)\|_{L^{\infty}}&\lesssim (1+t)^{-\frac{n-2\delta}{2(\sigma-\delta)}}\|(u_0,u_1)\|_{(H^{\sigma}\cap L^1)\times(L^2\cap L^1)},\label{star-03}
\end{align}
which imply \eqref{linear-Data-A} immediately.
Thanks to the definition of $\|\cdot\|_{X_T}$, the Riesz-Thorin interpolation between the $L^{p_{\mathrm{c}}(n)}$ and $L^{\infty}$ norms implies 
\begin{align}\label{Star-04}
	\|w(\eta,\cdot)\|_{L^{rp_{\mathrm{c}}(n)}}\lesssim (1+\eta)^{-\frac{1}{2(\sigma-\delta)}\left(n\left(1-\frac{1}{rp_{\mathrm{c}}(n)}\right)-2\delta\right)}\|w\|_{X_{T}}
\end{align}
for any $r\geqslant 1$, with $w=u,\tilde{u}$ and $u-\tilde{u}$ (notice that the power of $(1+\eta)$ is monotone decreasing with respect to $r$).
Moreover, by applying estimates \eqref{star-02}, \eqref{star-03} for $K_1(t-\eta,|D|)$ and recalling $\|u\|_{X_{\eta}},\|\tilde{u}\|_{X_{\eta}}\leqslant \nu$, we may derive
\begin{align*}
	\|\ml{N}[u](t,\cdot)-\ml{N}[\tilde{u}](t,\cdot)\|_{L^q}
	&=\left\|\int_0^tK_1(t-\eta,|D|)\left[g\big(u(\eta,\cdot)\big)-g\big(\tilde{u}(\eta,\cdot)\big)\right]\mathrm{d}\eta\right\|_{L^q}\\
	&\lesssim\int_0^t(1+t-\eta)^{-\frac{1}{2(\sigma-\delta)}\left(n\left(1-\frac{1}{q}\right)-2\delta\right)}\left\|g\big(u(\eta,\cdot)\big)-g\big(\tilde{u}(\eta,\cdot)\big)\right\|_{L^2\cap L^1}\mathrm{d}\eta\\
	&\lesssim \ml{W}_{[u,\tilde{u}],X_T}^{p_{\mathrm{c}}(n)}\int_0^t(1+t-\eta)^{-\frac{1}{2(\sigma-\delta)}\left(n\left(1-\frac{1}{q}\right)-2\delta\right)}(1+\eta)^{-1}\mu\left(2(1+\eta)^{-\frac{n-2\delta}{2(\sigma-\delta)}}\nu\right)\mathrm{d}\eta
\end{align*}
with $q=p_{\mathrm{c}}(n)$ and $q=+\infty$, where we employed Lemma \ref{Lemma-Nonlinearity} with \eqref{Star-04} by taking $r=1,2$ (thanks to our condition $(A_1)$, the assumption \eqref{Assumption-Nonlinearity} holds true taking $\nu$ sufficiently small such that $\tau_1>2\nu$). With a suitable constant $\widetilde{C}_2>0$ one deduces
%
\begin{align*}
	&\int_0^t(1+t-\eta)^{-\alpha_0}(1+\eta)^{-1}\mu\left(\widetilde{C}_2(1+\eta)^{-\alpha_1}\nu\right)\mathrm{d}\eta\notag\\
	&\lesssim (1+t)^{-\alpha_0}\int_0^{t/2}(1+\eta)^{-1}\mu\left(\widetilde{C}_2(1+\eta)^{-\alpha_1}\nu\right)\mathrm{d}\eta+(1+t)^{-1}\mu\left(\widetilde{C}_2(1+t)^{-\alpha_1}\nu/2\right)\int_{t/2}^t(1+t-\eta)^{-\alpha_0}\mathrm{d}\eta\notag\\
	&\lesssim  (1+t)^{-\alpha_0}\int_0^{t/2}(1+\eta)^{-1}\mu\left(\widetilde{C}_2(1+\eta)^{-\alpha_1}\nu\right)\mathrm{d}\eta+(1+t)^{-\alpha_0}\mu\left(\widetilde{C}_2(1+t)^{-\alpha_1}\nu/2\right)
\end{align*}
provided that $\alpha_0< 1$ and $\alpha_1\geqslant 0$, because $t-\eta\sim t$ for $\eta\in [0,t/2]$, $\eta\sim t$ for  $\eta\in[t/2,t]$ and $\mu$ is monotone increasing. Finally, since $\mu$ is continuous and it satisfies the Dini condition \eqref{Dini-Condition} we can estimate
\begin{align*}
	&\int_0^t(1+t-\eta)^{-\frac{1}{2(\sigma-\delta)}\left(n\left(1-\frac{1}{q}\right)-2\delta\right)}(1+\eta)^{-1}\mu\left(2(1+\eta)^{-\frac{n-2\delta}{2(\sigma-\delta)}}\nu\right)\mathrm{d}\eta\lesssim (1+t)^{-\frac{1}{2(\sigma-\delta)}\left(n\left(1-\frac{1}{q}\right)-2\delta\right)},
\end{align*}
for both $q=p_{\mathrm{c}}(n)$ and $q=+\infty$, being 
\begin{align*}
	0\leqslant \frac{n-2\delta}{2(\sigma-\delta)}\leqslant \frac{1}{2(\sigma-\delta)}\left(n\left(1-\frac{1}{p_{\mathrm{c}}(n)}\right)-2\delta\right)= \frac{1}{p_{\mathrm{c}}(n)}<1,
\end{align*}
for any $n>2\delta$ due to \eqref{Critical-Exponent}. Thus, for any $2\delta<n<2\sigma$ the crucial estimate  \eqref{Crucial-02} follows with $C_1(\mu, \nu, T)$ satisfying \eqref{Crucial-04}. 

\paragraph{\large The Limit Case $\delta=\sigma/2$}
According to \cite[Proposition 4.4]{D'Abbicco-Ebert=2017}, we are able to get
\begin{align*}
	\|u_{\lin}(t,\cdot)\|_{L^1}&\lesssim (1+t)\|(u_0,u_1)\|_{L^1\times L^1},\\
	\|u_{\lin}(t,\cdot)\|_{L^{\infty}}&\lesssim (1+t)^{1-\frac{n}{\sigma}}\|(u_0,u_1)\|_{(L^{\infty}\cap L^1)\times (L^{\frac{n}{\sigma}}\cap L^1)},
\end{align*}
where the additional assumption $L^{\infty}\times L^{\frac{n}{\sigma}}$ for the Cauchy data in the $L^{\infty}$ estimate of $u_{\lin}(t,\cdot)$ allows to avoid the singularity as $t\to 0^+$ for large frequencies. By interpolation between $L^1$ and $L^{\infty}$, it is easy to check \eqref{linear-Data-A}. Again from \cite[Proposition 4.4]{D'Abbicco-Ebert=2017} one may claim the following $L^p$-$L^q$ estimate:
\begin{align*}
	\|K_1(t,|D|)f_0\|_{L^q}\lesssim t^{1-\frac{n}{\sigma}\left(\frac{1}{p}-\frac{1}{q}\right)}\|f_0\|_{L^p},
\end{align*}
for $1\leqslant p\leqslant q\leqslant +\infty$. Following  \cite[Proof of Theorem 2.2]{D'Abbicco-Girardi=2023}, for any $t\geqslant t_0>1$ we apply the $L^1$-$L^q$ estimate in $[0,t/2]$ and the $L^q$-$L^q$ estimate in $[t/2,t]$ with $q=p_{\mathrm{c}}(n)$ and $q=+\infty$ to obtain
\begin{align*}
	&\|\ml{N}[u](t,\cdot)-\ml{N}[\tilde{u}](t,\cdot)\|_{L^q}\\
	&\lesssim\int_0^{t/2}(t-\eta)^{1-\frac{n}{\sigma}\left(1-\frac{1}{q}\right)}\left\|g\big(u(\eta,\cdot)\big)-g\big(\tilde{u}(\eta,\cdot)\big)\right\|_{L^1}\mathrm{d}\eta+\int_{t/2}^t(t-\eta)\left\|g\big(u(\eta,\cdot)\big)-g\big(\tilde{u}(\eta,\cdot)\big)\right\|_{L^q}\mathrm{d}\eta\\
	&\lesssim \ml{W}_{[u,\tilde{u}],X_T}^{p_{\mathrm{c}}(n)} \Big[t^{1-\frac{n}{\sigma}\left(1-\frac{1}{q}\right)}\int_0^t(1+\eta)^{-1}\mu\left(C_2(1+\eta)^{1-\frac{n}{\sigma}}\nu\right)\mathrm{d}\eta+(1+t)^{1-\frac{n}{\sigma}\left(1-\frac{1}{q}\right)}\mu(C_4\nu)\Big]
\end{align*}
with a positive constant $C_4>0$ independent of $\nu$; here, for any $n>\sigma$ we employed
\begin{align*}
	&\int_{t/2}^t(t-\eta)(1+\eta)^{-1-\frac{n}{\sigma}\left(1-\frac{1}{q}\right)}\mu\left(2(1+\eta)^{1-\frac{n}{\sigma}}\nu\right)\mathrm{d}\eta\lesssim (1+t)^{-1-\frac{n}{\sigma}\left(1-\frac{1}{q}\right)}\mu(C_4\nu)\int_{t/2}^t(t-\eta)\,\mathrm{d}\eta.
\end{align*}
On the other hand, for any $t\leqslant t_0$, being $1+\eta\gtrsim t-\eta$ if $\eta\in[t/2,t]$, we can employ the $L^q$-$L^q$ estimate with $q=p_{\mathrm{c}}(n)$ and $q=+\infty$ in $[0,t]$ to derive
\begin{align*}
	\|\ml{N}[u](t,\cdot)-\ml{N}[\tilde{u}](t,\cdot)\|_{L^q}
	&\lesssim\ml{W}_{[u,\tilde{u}],X_T}^{p_{\mathrm{c}}(n)}\int_0^t(t-\eta)(1+\eta)^{-1-\frac{n}{\sigma}\left(1-\frac{1}{q}\right)}\mu\left(2(1+\eta)^{1-\frac{n}{\sigma}}\nu\right)\mathrm{d}\eta\\
	&\lesssim \ml{W}_{[u,\tilde{u}],X_T}^{p_{\mathrm{c}}(n)}\left(\int_{t/2}^t(1+\eta)^{-\frac{n}{\sigma}\left(1-\frac{1}{q}\right)}\mu\left(2(1+\eta)^{1-\frac{n}{\sigma}}\nu\right)\mathrm{d}\eta+t^2\mu(2\nu)\right)\\
	&\lesssim \ml{W}_{[u,\tilde{u}],X_T}^{p_{\mathrm{c}}(n)}\left(\int_0^t(1+\eta)^{-1}\mu\left(C_2(1+\eta)^{1-\frac{n}{\sigma}}\nu\right)\mathrm{d}\eta+\mu(C_4\nu)\right).
\end{align*}
The summary of the last two estimates shows \eqref{Crucial-02} with $C_1(\mu,\nu,T)$ satisfying
\begin{align*}
	C_1(\mu,\nu,T)\leqslant \frac{C_3}{2}\left(\int^{C_2\nu}_{C_2(1+T)^{-\frac{n-\sigma}{\sigma}}\nu}\frac{\mu(\tau)}{\tau}\,\mathrm{d}\tau+\mu(C_4\nu)\right).
\end{align*}
Being $p_{\mathrm{c}}(n)>1$ and $\mu(0)=0$, thanks to the continuity of $\mu(\tau)$ as $\tau\to0^+$ we can choose a small constant $\nu$ such that $4C_3\mu(C_4\nu)\leqslant\nu^{1-p_{\mathrm{c}}(n)}$ . Additionally, since $\mu$ satisfies the Dini condition \eqref{Dini-Condition} one can take $\nu$ sufficiently small such that
\begin{align*}
	4C_3\int^{C_2\nu}_{C_2(1+T)^{-\frac{n-\sigma}{\sigma}}\nu}\frac{\mu(\tau)}{\tau}\,\mathrm{d}\tau\leqslant \nu^{1-p_{\mathrm{c}}(n)}.
\end{align*}
Finally, the approach described in Subsection \ref{Subsection-Philosophy} allows to complete  the proof of Theorem \ref{Thm-GESDS} and Theorem \ref{Thm-Lower-Bound}.

\paragraph{\large The Non-effective Case $\delta\in(\sigma/2,\sigma]$}
Let $n=1$ with $\sigma\in(2/3,1)$ or $1<\sigma<n\leqslant \bar{n}(\sigma)$. From \cite[Theorems 2.1 and 9.1]{D'Abbicco-Ebert=2021},  for some $m>n/(2\delta)$ we already know
\begin{align*}
	\|u_{\lin}(t,\cdot)\|_{L^{p_{\mathrm{c}}(n)}}&\lesssim (1+t)^{-\frac{1}{p_{\mathrm{c}}(n)}}\|(u_0,u_1)\|_{(H^{2\delta}_m\cap L^1)\times (L^m\cap L^1)},\\
	\|u_{\lin}(t,\cdot)\|_{L^{\infty}}&\lesssim(1+t)^{1-\frac{n}{\sigma}}\|(u_0,u_1)\|_{(H^{2\delta}_m\cap L^1)\times (L^m\cap L^1)},
\end{align*}
which implies \eqref{linear-Data-A} immediately.
Similarly to the effective case, due to the interpolation between $\|w(\eta,\cdot)\|_{L^{p_{\mathrm{c}}(n)}}$ and $\|w(\eta,\cdot)\|_{L^{\infty}}$ we find
\begin{align}\label{w-2-est}
	\|w(\eta,\cdot)\|_{L^{rp_{\mathrm{c}}(n)}}\lesssim (1+\eta)^{1-\frac{n}{\sigma}\left(1-\frac{1}{rp_{\mathrm{c}}(n)}\right)}\|w\|_{X_{T}}
\end{align}
with $w=u,\tilde{u}$ and $u-\tilde{u}$ for any $r\geqslant 1$, we conclude
\begin{align*}
	\|\ml{N}[u](t,\cdot)-\ml{N}[\tilde{u}](t,\cdot)\|_{L^q}&\lesssim \ml{W}_{[u,\tilde{u}],X_T}^{p_{\mathrm{c}}(n)} \int_0^t(1+t-\eta)^{1-\frac{n}{\sigma}\left(1-\frac{1}{q}\right)}(1+\eta)^{-1}\mu\left(2(1+\eta)^{1-\frac{n}{\sigma}}\nu\right)\mathrm{d}\eta\\
	&\lesssim \ml{W}_{[u,\tilde{u}],X_T}^{p_{\mathrm{c}}(n)} (1+t)^{1-\frac{n}{\sigma}\left(1-\frac{1}{q}\right)}
\end{align*}
with $q=p_{\mathrm{c}}(n)$ and $q=+\infty$, where we used Lemma \ref{Lemma-Nonlinearity} with $\sigma\leqslant n\leqslant 2\sigma$.
In conclusion, we get \eqref{Crucial-02} with $ C_1(\mu,\nu,T)$ satisfying \eqref{Crucial-04}.

\section{Blow-up result and sharp upper bound estimates of the lifespan}\setcounter{equation}{0}\label{Section-Blow-up}
\subsection{Strategy of our proofs}
\label{Section-Blow-up_1}
\hspace{5mm}Analogously to Subsection \ref{Subsection-Philosophy}, for the sake of readability we will explain our approach in proving the blow-up result and the sharp upper bound estimate of the lifespan $T_{\varepsilon}$.

For any $R\geqslant 1$, we will construct a suitable functional $Y(R)\geqslant 0$ which satisfies the following crucial nonlinear differential inequality:
\begin{align}\label{Crucial-05}
	Y'(R)\gtrsim R^{\frac{n}{\kappa
	}}g\left[R^{-\frac{n-\min\{2\delta,\sigma\}}{\kappa}}\big(C_5 Y(R)+C_{u_0,u_1}\varepsilon\big)\right]
\end{align}
with suitable constants $C_5,C_{u_0,u_1}>0$ independent of $R$ and $\varepsilon$, where $C_{u_0,u_1}$ is related to the integral over $\mb{R}^n$ of the initial data $u_0,u_1$.\\
Then, introducing
\begin{align}\label{tilde-YR}
	\widetilde{Y}(R):=C_5Y(R)+C_{u_0,u_1}\varepsilon\geqslant C_{u_0,u_1}\varepsilon>0,
\end{align}
and recalling the definition of the nonlinearity $g$ in \eqref{g-function}, we can derive from \eqref{Crucial-05} 
\begin{align*}
	\frac{\widetilde{Y}'(R)}{[\widetilde{Y}(R)]^{p_{\mathrm{c}}(n)}}&\gtrsim R^{\frac{n-(n-\min\{2\delta,\sigma\})p_{\mathrm{c}}(n)}{\kappa}}\mu\left(R^{-\frac{n-\min\{2\delta,\sigma\}}{\kappa}}\widetilde{Y}(R)\right)\\
	&\gtrsim R^{-1}\mu\left(C_{u_0,u_1}\varepsilon R^{-\frac{n-\min\{2\delta,\sigma\}}{\kappa}}\right),
\end{align*}
thanks to the increasing monotonicity of $\mu$. By integrating the last inequality over $[1,R]$, being
\begin{align*}
	\int_{1}^R\frac{\widetilde{Y}'(\tau)}{[\widetilde{Y}(\tau)]^{p_{\mathrm{c}}(n)}}\,\mathrm{d}\tau\leqslant\frac{1}{p_{\mathrm{c}}(n)-1}\left(C_{u_0,u_1}^{1-p_{\mathrm{c}}(n)}\varepsilon^{1-p_{\mathrm{c}}(n)}-[\widetilde{Y}(R)]^{1-p_{\mathrm{c}}(n)}\right),
\end{align*}
and
\begin{align*}
	\int_{1}^R\tau^{-1}\mu\left(C_{u_0,u_1}\varepsilon \tau^{-\frac{n-\min\{2\delta,\sigma\}}{\kappa}}\right)\mathrm{d}\tau\gtrsim\int_{C_{u_0,u_1}R^{-\frac{n-\min\{2\delta,\sigma\}}{\kappa}}\varepsilon}^{C_{u_0,u_1}\varepsilon}\frac{\mu(\tau)}{\tau}\,\mathrm{d}\tau,
\end{align*}
we derive
\begin{align}\label{Est-widetilde-YR}
	[\widetilde{Y}(R)]^{1-p_{\mathrm{c}}(n)}&\lesssim C_{u_0,u_1}^{1-p_{\mathrm{c}}(n)}\varepsilon^{1-p_{\mathrm{c}}(n)}-\int_{C_{u_0,u_1}R^{-\frac{n-\min\{2\delta,\sigma\}}{\kappa}}\varepsilon}^{C_{u_0,u_1}\varepsilon}\frac{\mu(\tau)}{\tau}\,\mathrm{d}\tau\notag\\
	&\lesssim C_{u_0,u_1}^{1-p_{\mathrm{c}}(n)}\varepsilon^{1-p_{\mathrm{c}}(n)}-\ml{H}\left(C_{u_0,u_1}R^{-\frac{n-\min\{2\delta,\sigma\}}{\kappa}}\varepsilon \right)+\ml{H}(C_{u_0,u_1}\varepsilon)
\end{align}
for any $R\geqslant 1$.
\begin{description}
	\item[(a) Blow-up of Solutions:] Let us assume that the solution $u$ to \eqref{Sigma-Evolution-Modulus} is globally in time defined; then, it turns out that $Y(R)$ and $\widetilde{Y}(R)$ are defined for any $R\geqslant 1$. Nevertheless, this produces a contradiction as $R\to+\infty$; indeed, the left-hand side of \eqref{Est-widetilde-YR} is positive due to \eqref{tilde-YR}, whereas,  by the monotone convergence theorem, the right-hand side tends to $-\ml{H}(0)=-\infty$ since the modulus of continuity $\mu(\tau)$ does not satisfy the Dini condition \eqref{Dini-Condition}. The proof of our Theorem \ref{Thm-Blow-up} follows.
	\item[(b) Upper Bound Estimates of the Lifespan:] Employing again the positivity of $\widetilde{Y}(R)$, we can deduce that the inequality \eqref{Est-widetilde-YR} is violated for any $R\geqslant 1$ such that
	\begin{align}\label{Contradiction-01}
		C_{u_0,u_1}^{1-p_{\mathrm{c}}(n)}\varepsilon^{1-p_{\mathrm{c}}(n)}-\ml{H}\left(C_{u_0,u_1}R^{-\frac{n-\min\{2\delta,\sigma\}}{\kappa}}\varepsilon \right)+\ml{H}(C_{u_0,u_1}\varepsilon)<0;
	\end{align}
	in other words, we obtain that $\widetilde{Y}(R)$ cannot be defined for any $R\geqslant  T_{\varepsilon,u}$, where $T_{\varepsilon,u}$ satisfies the identity 
	\begin{align*}
		\ml{H}\left(C_{u_0,u_1}T_{\varepsilon,u}^{-\frac{n-\min\{2\delta,\sigma\}}{\kappa}}\varepsilon \right)-\ml{H}(C_{u_0,u_1}\varepsilon)=	C_{u_0,u_1}^{1-p_{\mathrm{c}}(n)}\varepsilon^{1-p_{\mathrm{c}}(n)}.
	\end{align*}
	Therefore, defining
	\begin{align*}
		\widetilde{k}_1:=C_{u_0,u_1}^{1-p_{\mathrm{c}}(n)},\ \ \widetilde{k}_2:=C_{u_0,u_1},\ \ \widetilde{K}:=C_{u_0,u_1}^{\frac{\kappa}{n-\min\{2\delta,\sigma\}}},
	\end{align*}
	we derive our desired estimate \eqref{Upper-Bound-Lifespan} in Theorem \ref{Thm-Lower-Bound}.
\end{description}

Finally, let us recall the following generalized version of Jensen's inequality, whose proof also has been shown in \cite[Lemma 8]{Ebert-Girardi-Reissig=2020}.
\begin{lemma}[Jensen's Inequality, \cite{Pick-Kufner-John-Fucik=2013}]\label{Lem-Jensen-Ineq} Let $h(\tau)$ be a convex function on $\mb{R}$. Let $\alpha(x)$ be defined and non-negative almost everywhere on $\Omega$ such that $\alpha(x)$ is positive in a set of positive measure. Then, the following inequality holds:
	\begin{align*}
		h\left(\frac{\int_{\Omega}v(x)\,\alpha(x)\,\mathrm{d}x}{\int_{\Omega}\alpha(x)\,\mathrm{d}x}\right)\leqslant\frac{\int_{\Omega}h(v(x))\,\alpha(x)\,\mathrm{d}x}{\int_{\Omega}\alpha(x)\,\mathrm{d}x}
	\end{align*}
	for all non-negative functions $v(x)$ provided that all the integral terms are meaningful.
\end{lemma}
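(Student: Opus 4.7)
The plan is to reduce the weighted Jensen inequality to the supporting hyperplane (subgradient) property of convex functions, and then to integrate this pointwise inequality against the measure $\alpha(x)\,\mathrm{d}x$. Concretely, set
\begin{align*}
\bar{v} := \frac{\int_{\Omega}v(x)\,\alpha(x)\,\mathrm{d}x}{\int_{\Omega}\alpha(x)\,\mathrm{d}x},
\end{align*}
which is well-defined and finite since $\alpha$ is non-negative with positive measure support. My target is to bound $h(\bar{v})$ from above by the right-hand side of the stated inequality.

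The first step is to invoke the classical fact that any convex function $h:\mb{R}\to\mb{R}$ admits at a given point $\bar{v}$ a supporting affine minorant: there exist constants $a,b\in\mb{R}$ (one may take $a$ to be any element of the subdifferential $\partial h(\bar{v})$, for instance the left or right derivative at $\bar{v}$, and $b:=h(\bar{v})-a\bar{v}$) such that
\begin{align*}
h(\tau)\geqslant a\tau+b \ \ \text{for all}\ \ \tau\in\mb{R}, \qquad h(\bar{v})=a\bar{v}+b.
\end{align*}
Specializing this pointwise inequality at $\tau=v(x)$ and multiplying by the non-negative weight $\alpha(x)$ preserves the inequality, giving $h(v(x))\,\alpha(x)\geqslant a\,v(x)\,\alpha(x)+b\,\alpha(x)$ for almost every $x\in\Omega$.

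Next, I integrate both sides over $\Omega$; the integrability of all terms is guaranteed by the hypothesis that the integrals in the statement are meaningful. Dividing by the strictly positive quantity $\int_{\Omega}\alpha(x)\,\mathrm{d}x$ yields
\begin{align*}
\frac{\int_{\Omega}h(v(x))\,\alpha(x)\,\mathrm{d}x}{\int_{\Omega}\alpha(x)\,\mathrm{d}x}\geqslant a\,\frac{\int_{\Omega}v(x)\,\alpha(x)\,\mathrm{d}x}{\int_{\Omega}\alpha(x)\,\mathrm{d}x}+b = a\bar{v}+b = h(\bar{v}),
\end{align*}
which is precisely the claimed inequality. The only delicate point is the existence of the supporting line at an interior point; since $h$ is convex on all of $\mb{R}$, the one-sided derivatives $h'_-(\bar{v})$ and $h'_+(\bar{v})$ exist and are finite, so any $a\in[h'_-(\bar{v}),h'_+(\bar{v})]$ works, which is a standard fact in convex analysis (and the statement of the lemma does not require the non-negativity of $v$ to play any role beyond ensuring that $\bar{v}$ lies in $\mb{R}$, which it does automatically). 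Hence no real obstacle arises and the proof is concluded in a single application of subdifferentiability combined with monotonicity of the integral.
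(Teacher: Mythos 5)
Your proof is correct: the supporting-line (subgradient) argument is the standard and complete way to establish this weighted Jensen inequality, and the hypotheses (convexity of $h$ on all of $\mb{R}$, non-negativity of $\alpha$ with $\int_{\Omega}\alpha>0$, and meaningfulness of the integrals) are exactly what your argument uses. Note that the paper does not reproduce a proof at all — it simply cites \cite{Pick-Kufner-John-Fucik=2013} and \cite[Lemma 8]{Ebert-Girardi-Reissig=2020} — so your write-up supplies the standard argument that those references contain.
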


\subsection{Actions of fractional Laplacians on the general test function}
\hspace{5mm}As a preparation, we first state the following lemma which will be crucial in the choice of the test functions in the proof of our blow-up result and upper bound estimates of the lifespan.
\begin{lemma}\label{Lem-Varphi-Fractional-Derivatives} 
	Let $\bar\sigma\geqslant 0$ and consider
	\begin{align*}
		\Psi(t,x):=\left(1+t^{2\alpha_2}+|x|^{2\beta_2}\right)^{-r_0},
	\end{align*}
	with $\alpha_2\geqslant 1$, $\beta_2\geqslant[\bar{\sigma}]+2$ and $r_0>n/(2\beta_2)$,  in which $[\bar{\sigma}]$ and $\{\bar{\sigma}\}$ denote the integer part and the fractional part of $\bar{\sigma}$, respectively. \\Then, $\Psi\in\ml{C}^2([0,+\infty),\ml{C}^{2+2[\bar{\sigma}]})$; moreover, the following estimates hold for any $x\in\mb{R}^n$ and $t\in[0,1]$:
	\begin{align}
		|\partial_t^j\Psi(t,x)|&\lesssim \Psi(t,x)\ \qquad\qquad\qquad \ \ \mbox{for}\ \ j=0,1,2,\label{Fra-01}\\
		|(-\Delta)^{\bar{\sigma}}\partial_t^j\Psi(t,x)|&\lesssim \left(1+t^{2\alpha_2}+|x|^{2\beta_2}\right)^{-r_1}\ \ \mbox{for}\ \ j=0,1,\label{Fra-02}
	\end{align}
	where 
	$r_1:=
	r_0+\bar\sigma/\beta_2$ if $\bar\sigma\in \mb N_0$, whereas $r_1:=
	(n+2\{\bar\sigma\})/(2\beta_2)$ if $\bar\sigma\in [0,+\infty)\backslash \mb N_0$.
\end{lemma}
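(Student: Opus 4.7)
\textbf{Proof plan for Lemma \ref{Lem-Varphi-Fractional-Derivatives}.} The argument splits naturally into three blocks: regularity, time-derivative bounds \eqref{Fra-01}, and fractional-Laplacian bounds \eqref{Fra-02}, with the last one being the substantive part.

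\textbf{Regularity and time derivatives.} The function $f(s)=(1+s)^{-r_0}$ is $C^{\infty}$ on $s\geqslant 0$, so it suffices to control the regularity of $s(t,x):=t^{2\alpha_2}+|x|^{2\beta_2}$. Since $\alpha_2\geqslant 1$, the map $t\mapsto t^{2\alpha_2}$ is $C^2$ on $[0,+\infty)$; since $\beta_2\geqslant [\bar{\sigma}]+2$, the map $x\mapsto |x|^{2\beta_2}=(|x|^2)^{\beta_2}$ is at least $C^{2[\beta_2]}\supseteq C^{2+2[\bar{\sigma}]}$ (smooth away from $0$, with enough derivatives at the origin thanks to the smooth inner square). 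Composing with the bounded smooth function $f$, we obtain $\Psi\in\ml{C}^2([0,+\infty),\ml{C}^{2+2[\bar{\sigma}]})$. For \eqref{Fra-01}, direct differentiation gives
\begin{align*}
\partial_t\Psi &= -2\alpha_2 r_0\, t^{2\alpha_2-1}(1+t^{2\alpha_2}+|x|^{2\beta_2})^{-r_0-1},\\
\partial_t^2\Psi &= 4\alpha_2^2 r_0(r_0+1)\,t^{4\alpha_2-2}(1+\cdots)^{-r_0-2}-2\alpha_2(2\alpha_2-1)r_0\,t^{2\alpha_2-2}(1+\cdots)^{-r_0-1}.
\end{align*}
For $t\in[0,1]$ and $\alpha_2\geqslant 1$, the powers $t^{2\alpha_2-1}$, $t^{2\alpha_2-2}$, $t^{4\alpha_2-2}$ are uniformly bounded by $1$; the extra factors $(1+\cdots)^{-k}$ with $k\geqslant 1$ are also $\leqslant 1$. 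This yields $|\partial_t^j\Psi|\lesssim\Psi$ for $j=0,1,2$.

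\textbf{Fractional Laplacian, integer case.} When $\bar{\sigma}\in\mb{N}_0$, I would argue by induction. A single application of $-\Delta$ to $(1+\cdots)^{-r}$ produces, via the chain rule, two kinds of contributions:
\begin{align*}
-\Delta\big[(1+\cdots)^{-r}\big]&=-r\,\Delta(|x|^{2\beta_2})(1+\cdots)^{-r-1}+r(r+1)\bigl|\nabla(|x|^{2\beta_2})\bigr|^2(1+\cdots)^{-r-2}.
\end{align*}
Since $|\nabla(|x|^{2\beta_2})|^2\lesssim |x|^{4\beta_2-2}\lesssim (1+\cdots)^{2-1/\beta_2}$ and $|\Delta(|x|^{2\beta_2})|\lesssim |x|^{2\beta_2-2}\lesssim (1+\cdots)^{1-1/\beta_2}$ (for $|x|\geqslant 1$; the near-origin case is trivial), both terms are bounded by $(1+\cdots)^{-r-1/\beta_2}$. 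Iterating this estimate $\bar{\sigma}$ times yields $r_1=r_0+\bar{\sigma}/\beta_2$. The same computation applies to $\partial_t\Psi$ since the time factor commutes with $(-\Delta)^{\bar{\sigma}}$.

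\textbf{Fractional Laplacian, non-integer case.} This is the main obstacle. I use the integral representation \eqref{Frac-Defn}, which gives
\begin{align*}
|(-\Delta)^{\bar{\sigma}}\Psi(t,x)|\lesssim \int_{\mb{R}^n}\frac{|(\tau_{y/2}-\tau_{-y/2})^{2[\bar{\sigma}]+2}\Psi(t,x)|}{|y|^{n+2\bar{\sigma}}}\,\mathrm{d}y.
\end{align*}
Set $R:=(1+t^{2\alpha_2}+|x|^{2\beta_2})^{1/(2\beta_2)}$ and split the integral into $\{|y|\leqslant R\}$ and $\{|y|>R\}$. In the inner region, use the fact that $\Psi\in \ml{C}^{2+2[\bar{\sigma}]}$ with uniform bounds on derivatives proportional to the largest of $\Psi$ on $B(x,|y|)$, so the finite difference of order $2[\bar{\sigma}]+2$ is bounded by $|y|^{2[\bar{\sigma}]+2}$ times a uniform constant, contributing $\int_0^R|y|^{2[\bar{\sigma}]+2-n-2\bar{\sigma}}|y|^{n-1}\,\mathrm{d}y\lesssim R^{2-2\{\bar{\sigma}\}}$; combined with a bound $\|\Psi\|_{L^{\infty}}\lesssim 1$ and the location-dependent sharper estimate $\sup_{B(x,R)}\Psi\lesssim (1+|x|^{2\beta_2})^{-r_0}$, one reorganizes the powers to reach the target decay. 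In the outer region, the iterated difference is controlled by a finite sum of translates of $\Psi$, each bounded by $\|\Psi\|_{L^{\infty}}\lesssim 1$, yielding $\int_R^{\infty}|y|^{-1-2\bar{\sigma}}\,\mathrm{d}y\lesssim R^{-2\bar{\sigma}}$. Summing both regions, the dominant contribution is of order $R^{-(n+2\{\bar{\sigma}\})}=(1+t^{2\alpha_2}+|x|^{2\beta_2})^{-(n+2\{\bar{\sigma}\})/(2\beta_2)}$, which is exactly the claimed $r_1$ in the non-integer case. The hardest technical point is to show rigorously that the region $|y|\leqslant R$ contributes no worse than $R^{-(n+2\{\bar{\sigma}\})}$; I would exploit the factored form of $\Psi$ and the mean-value theorem applied to $f^{(2[\bar{\sigma}]+2)}$ along the integration cube of differences, keeping track of where the argument $s=t^{2\alpha_2}+|x+\xi|^{2\beta_2}$ with $|\xi|\lesssim|y|\leqslant R$ remains comparable to $R^{2\beta_2}$. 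The case $j=1$ follows verbatim by replacing $\Psi$ with $\partial_t\Psi$, which satisfies the same pointwise bound by \eqref{Fra-01}.
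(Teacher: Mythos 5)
The regularity discussion, the time-derivative bounds \eqref{Fra-01}, and the integer case via iterated $-\Delta$ are correct and equivalent in effect to the paper's representation \eqref{Rep-Integer-Part}. The non-integer case, however, has a gap that invalidates the proposed scheme.

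The fatal choice is the splitting radius $R=(1+t^{2\alpha_2}+|x|^{2\beta_2})^{1/(2\beta_2)}$. Since $R>|x|$ always (and even $([\bar\sigma]+1)|y|$ can exceed $|x|$ for $|y|$ well below $R$, because the order-$(2[\bar\sigma]+2)$ iterated difference translates the argument by multiples of $y/2$ up to $([\bar\sigma]+1)|y|$), the ball you sample in the inner region always contains the origin, where $\Psi$ and all of its derivatives are of order $1$. Consequently the asserted bound $\sup_{B(x,R)}\Psi\lesssim(1+|x|^{2\beta_2})^{-r_0}$ is false: the supremum is $\approx 1$ independently of $|x|$. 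Your own computation gives the inner contribution $\lesssim R^{2-2\{\bar\sigma\}}$, which grows as $|x|\to\infty$, and the step "reorganizes the powers to reach the target decay" cannot be carried out because the derivative bound you would need is unavailable on $B(x,R)$. Similarly, in the outer region the crude $\|\Psi\|_{L^\infty}\lesssim 1$ bound yields $R^{-2\bar\sigma}$, which matches the target $R^{-(n+2\{\bar\sigma\})}$ only when $2[\bar\sigma]\geqslant n$; for $\bar\sigma\in(0,1)$ (or more generally $[\bar\sigma]<n/2$) this is strictly weaker than what \eqref{Fra-02} claims, so the stated conclusion "Summing both regions, the dominant contribution is of order $R^{-(n+2\{\bar\sigma\})}$" does not follow.

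The paper circumvents both issues by not using the direct $(2[\bar\sigma]+2)$-th iterated difference at all. It first applies $(-\Delta)^{[\bar\sigma]}$ explicitly, obtaining the finite sum of rational auxiliary functions $\Psi_\ell$ in \eqref{Rep-Integer-Part}, and then computes $(-\Delta)^{\{\bar\sigma\}}\Psi_\ell$ with $\{\bar\sigma\}\in(0,1)$ using only a second-order Taylor remainder and the splitting radius $\bar r=|x|/2$. Because the Taylor arguments $x+\Theta y$ then stay in $\{|x|/2<|z|<3|x|/2\}$, the derivatives of $\Psi_\ell$ in the inner region genuinely decay like a power of $|x|$, and the outer region is handled not by $L^\infty$ but by a further split into $\Omega_1=\{|x|/2<|y|<2|x|\}$ (change of variables $z=x+y$ and integration of the translated profile) and $\Omega_2=\{|y|\geqslant 2|x|\}$ (where $|x+y|\approx|y|$). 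Finally the paper treats $|x|<1$ by a separate one-step Taylor expansion in $x$ of $(-\Delta)^{\bar\sigma}\Psi$ around the origin; your proposal leaves this case unaddressed. If you want to keep the direct iterated-difference route you must shrink the inner radius to something like $|x|/(2([\bar\sigma]+2))$ so that the translates stay comparable to $|x|$, and replace the $\|\Psi\|_{L^\infty}$ bound on the outer region by a genuine integration of the translates; at that point you have essentially reconstructed the paper's argument.
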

\begin{proof}
	Let us begin with calculating
	\begin{align*}
		\partial_t\Psi(t,x)&=A_1\left(1+t^{2\alpha_2}+|x|^{2\beta_2}\right)^{-r_0-1}t^{2\alpha_2-1},\\
		\partial_t^2\Psi(t,x)&=A_2\left(1+t^{2\alpha_2}+|x|^{2\beta_2}\right)^{-r_0-2}t^{2(2\alpha_2-1)}+A_2'\left(1+t^{2\alpha_2}+|x|^{2\beta_2}\right)^{-r_0-1}t^{2(\alpha_2-1)},
	\end{align*}
	with suitable real constants $A_1$, $A_2$, $A_2'$ depending on $\alpha_2$, $r_0$. Thanks to $\alpha_2\geqslant 1$ the estimate \eqref{Fra-01} with $j=0,1,2$ holds true for any $x\in\mb{R}^n$ and $t\in[0,1]$. A direct computation yields
	\begin{align*}
		-\Delta\Psi(t,x)&=-2\beta_2r_0(r_0+1)\left(1+t^{2\alpha_2}+|x|^{2\beta_2}\right)^{-r_0-2}|x|^{4\beta_2-2}\\
		&\quad+2\beta_2(2\beta_2+n-2)r_0\left(1+t^{2\alpha_2}+|x|^{2\beta_2}\right)^{-r_0-1}|x|^{2\beta_2-2},
	\end{align*}
	which iteratively implies
	\begin{align}\label{Rep-Integer-Part}
		(-\Delta)^{[\bar{\sigma}]}\partial_t^j\Psi(t,x)=\sum\limits_{k=1}^{2[\bar{\sigma}]}B_{k,j}\left(1+t^{2\alpha_2}+|x|^{2\beta_2}\right)^{-r_0-k-j}|x|^{2k\beta_2-2[\bar{\sigma}]}t^{(2\alpha_2-1)j},
	\end{align}
	with suitable constants $B_{k,j}$ for $k\in\{1,\dots, 2[\bar{\sigma}]\}$ and $j=0,1$ depending on $\alpha_2$, $\beta_2$, $r_0$, $[\bar{\sigma}]$. \\
	Since $\beta_2\geqslant [\bar{\sigma}]=\bar{\sigma}$, the last equality immediately demonstrates \eqref{Fra-02} for $\bar{\sigma}\in\mb{N}_0$ and $j=0,1$. Similarly, for any $\bar{\sigma}\geqslant 0$ our assumptions $\alpha_2\geqslant 1$ and $\beta_2\geqslant [\bar{\sigma}]+1$ allows to conclude that $\Psi\in\ml{C}^2([0,+\infty),\ml{C}^{2+2[\bar{\sigma}]})$,  avoiding the singularity at $t=0$ and $x=0$.
	
	In the remaining part of this proof we consider the fractional case $\{\bar{\sigma}\}\in(0,1)$; we will separate our discussion into $[\bar{\sigma}]\in\mb{N}_+$ and $[\bar{\sigma}]=0$. \\
	Let us first discuss the case $[\bar{\sigma}]\in\mb{N}_+$. Motivated by the identity \eqref{Rep-Integer-Part}, we introduce the following new auxiliary functions:
	\begin{align*}
		\Psi_{\ell}(t,x):=\left(1+t^{2\alpha_2}+|x|^{2\beta_2}\right)^{-r_0-\ell}|x|^{2\ell\beta_2-2[\bar{\sigma}]}.
	\end{align*}
	For any $\ell\in\{1,\dots,2[\bar{\sigma}]\}$, since $\beta_2\geqslant \bar{\sigma}$ we can claim
	\begin{align}
		\Psi_{\ell}(t,x)&\lesssim |x|^{2\{\bar{\sigma}\}} \left(1+t^{2\alpha_2}+|x|^{2\beta_2}\right)^{-r_0-\frac{\sigma}{\beta_2}}\notag\\
		& \lesssim |x|^{2\{\bar{\sigma}\}}K(t,x) \ \ \mbox{with}\ \  K(t,x): =\left(1+t^{2\alpha_2}+|x|^{2\beta_2}\right)^{-\frac{n+2\{\bar\sigma\}}{2\beta_2}},\label{Est-Psi-ell-Psi}
	\end{align}
	being $r_0>n/(2\beta_2)$. By properly modifying \eqref{Frac-Defn} we can write
	\begin{align*}
		(-\Delta)^{\{\bar{\sigma}\}}\Psi_{\ell}(t,x)=-C_{2\{\bar{\sigma}\}}\int_{\mb{R}^n}\frac{\Psi_{\ell}(t,x+y)-2\Psi_{\ell}(t,x)+\Psi_{\ell}(t,x-y)}{|y|^{n+2\{\bar{\sigma}\}}}\,\mathrm{d}y.
	\end{align*}
	The Taylor theorem implies that
	\begin{align*}
		&\Psi_{\ell}(t,x\pm y)-\Psi_{\ell}(t,x)=\pm \nabla\Psi_{\ell}(t,x)\cdot y+\sum\limits_{|\bar{\alpha}|=2}\frac{|\bar{\alpha}|}{\bar{\alpha}!}(\pm y)^{\bar{\alpha}}\int_0^1(1-\Theta)\,\partial^{\bar{\alpha}}\Psi_{\ell}(t,x\pm \Theta y)\,\mathrm{d}\Theta.
	\end{align*}
	Due to the symmetry with respect to $y$, we may further rewrite the fractional Laplacian acting on the test function $\Psi_{\ell}(t,x)$ as follows:
	\begin{align}\label{Rep-Psi-ell}
		&(-\Delta)^{\{\bar{\sigma}\}}\Psi_{\ell}(t,x)=2C_{2\{\bar{\sigma}\}}\int_{|y|>\bar{r}}\frac{\Psi_{\ell}(t,x)-\Psi_{\ell}(t,x+y)}{|y|^{n+2\{\bar{\sigma}\}}}\,\mathrm{d}y\notag\\
		&\qquad\qquad\qquad\qquad-2C_{2\{\bar{\sigma}\}}\sum\limits_{|\bar{\alpha}|=2}\frac{|\bar{\alpha}|}{\bar{\alpha}!}\int_{|y|<\bar{r}}\frac{y^{\bar{\alpha}}}{|y|^{n+2\{\bar{\sigma}\}}}\int_0^1(1-\Theta)\,\partial^{\bar{\alpha}}\Psi_{\ell}(t,x+\Theta y)\,\mathrm{d}\Theta\,\mathrm{d}y,
	\end{align}
	for any $\bar{r}>0$. Let us take $\bar{r}=|x|/2$ and employ \eqref{Est-Psi-ell-Psi} to get 
	\begin{align*}
		\int_{|y|>|x|/2}\frac{\Psi_{\ell}(t,x)}{|y|^{n+2\{\bar{\sigma}\}}}\,\mathrm{d}y\lesssim K(t,x)|x|^{2\{\bar{\sigma}\}}\int_{|y|>|x|/2}\frac{1}{|y|^{n+2\{\bar{\sigma}\}}}\,\mathrm{d}y\lesssim K(t,x).
	\end{align*}
	Moreover, we may divide the domain $\{y:|y|>|x|/2 \}$ into $\Omega_1:=\{y:2|x|>|y|>|x|/2\}$ and $\Omega_2:=\{y:|y|\geqslant 2|x|\}$. Since $\beta_2\geqslant [\bar{\sigma}]+2$ and $r_0>n/(2\beta_2)$, there exists a small constant $\epsilon>0$ such that $\beta_2\geqslant \bar{\sigma}+\epsilon$ and $r_0>(n+2\epsilon)/(2\beta_2)$ allowing us in the sub-domain $\Omega_2$ to derive
	\begin{align*}
		\Psi_{\ell}(t,x+y)& = \left(1+t^{2\alpha_2}+|x+y|^{2\beta_2}\right)^{-r_0-\frac{\bar{\sigma}+\epsilon}{\beta_2}}\frac{|x+y|^{2\{\bar\sigma\}+2\epsilon}|x+y|^{2\beta_2(\ell-\frac{\bar\sigma+\epsilon}{\beta_2})}}{(1+t^{2\alpha_2}+|x+y|^{2\beta_2})^{\ell-\frac{\bar{\sigma}+\epsilon}{\beta_2}}}\\
		&\lesssim \left(1+t^{2\alpha_2}+|x+y|^{2\beta_2}\right)^{-r_0-\frac{\bar{\sigma}+\epsilon}{\beta_2}}|x+y|^{2\{\bar{\sigma}\}+2\epsilon}\\
		&\lesssim \left(1+t^{2\alpha_2}+|x|^{2\beta_2}\right)^{-r_0-\frac{\bar\sigma}{\beta_2}+\frac{\epsilon}{\beta_2}}\left(1+|y|^{2\beta_2}\right)^{-\frac{2\epsilon}{\beta_2}}|y|^{2\{\bar{\sigma}\}+2\epsilon}\\
		&\lesssim K(t,x)\left(1+|y|^{2\beta_2}\right)^{-\frac{2\epsilon}{\beta_2}}|y|^{2\{\bar{\sigma}\}+2\epsilon},
	\end{align*}
	where we used 
	\begin{align}\label{Triangle-01}
		2|y|>|x+y|\geqslant |y|-|x|>|y|/2>|x|\ \ \mbox{in}\ \ \Omega_2.
	\end{align}
	Since $\Omega_2\subset \mb{R}_+$, the next uniform estimate (with respect to $t,x$) holds:
	\begin{align*}
		\int_{\Omega_2}\frac{\Psi_{\ell}(t,x+y)}{|y|^{n+2\{\bar{\sigma}\}}}\,\mathrm{d}y\lesssim K(t,x)\int_0^{+\infty}|y|^{-1+2\epsilon}\left(1+|y|^{2\beta_2}\right)^{-\frac{2\epsilon}{\beta_2}}\mathrm{d}|y|\lesssim K(t,x).
	\end{align*}
	Let us  consider now the integral over $\Omega_1$.
	\begin{itemize}
		\item When $|x|>1$, since $y\in\Omega_1$,  we have
		\begin{align*}
			\Psi_{\ell}(t,x+y)\lesssim \left(1+t^{2\alpha_2}+|x+y|^{2\beta_2}\right)^{-r_0};
		\end{align*}
		here, we used that $\beta_2\geqslant[\bar{\sigma}]$ for $|x+y|<1$, whereas $|x+y|^{-2[\bar{\sigma}]}\leqslant 1$, otherwise. Then,  by defining $z:=x+y$, we may estimate
		\begin{align*}
			\int_{\Omega_1}\frac{\Psi_{\ell}(t,x+y)}{|y|^{n+2\{\bar{\sigma}\}}}\,\mathrm{d}y&\lesssim |x|^{-n-2\{\bar{\sigma}\}}\int_{0}^{+\infty}\left(1+|z|^{2\beta_2}\right)^{-r_0}|z|^{n-1}\,\mathrm{d}|z|\\
			&\lesssim \left(1+t^{2\alpha_2}+|x|^{2\beta_2}\right)^{-\frac{n+2\{\bar{\sigma}\}}{2\beta_2}}=K(t,x),
		\end{align*}
		uniformly in $t\in [0,1]$ and $x\in \mb R^n$ with $|x|>1$, since $2\beta_2r_0>n$ and $|y|\sim |x|$ in $\Omega_1$.
		\item When $|x|< 1$,  since $y\in\Omega_1$ we have $0\leqslant |x+y|<3|x|$; then, with the aid of
		\begin{align*}
			\Psi_{\ell}(t,x+y)\lesssim\frac{|x|^{2\ell\beta_2-2[\bar{\sigma}]}}{(1+t^{2\alpha_2})^{r_0+\ell}}\lesssim\frac{|x|^{2\ell\beta_2-2[\bar{\sigma}]}}{(1+t^{2\alpha_2}+|x|^{2\beta_2})^{r_0+\ell}}
		\end{align*}
		(note that $1+t^{2\alpha_2}\gtrsim (1+t^{2\alpha_2}+|x|^{2\beta_2})/2$ due to $|x|\leqslant 1$), one obtains
		\begin{align*}
			\int_{\Omega_1}\frac{\Psi_{\ell}(t,x+y)}{|y|^{n+2\{\bar{\sigma}\}}}\,\mathrm{d}y\lesssim \frac{|x|^{2\ell\beta_2-2\bar{\sigma}}}{(1+t^{2\alpha_2}+|x|^{2\beta_2})^{r_0+\ell}}\lesssim K(t,x),
		\end{align*}
		uniformly in $t\in [0,1]$ and $x\in \mb R^n$, thanks to $\beta_2\geqslant [\bar{\sigma}]+2\geqslant \bar{\sigma}$.
	\end{itemize}
	A simple computation addresses, for any $|\bar{\alpha}|=2$,
	\begin{align*}
		\partial^{\bar{\alpha}}\Psi_{\ell}(t,x)=\sum\limits_{j=0}^2\widetilde{B}_{j\ell}(\bar{\alpha})\left(1+t^{2\alpha_2}+|x|^{2\beta_2}\right)^{-r_0-\ell-j}|x|^{2(\ell+j)\beta_2-2[\bar{\sigma}]-2},
	\end{align*}
	for suitable constants $\widetilde{B}_{j\ell}(\bar{\alpha})$. Let us recall
	\begin{align*}
		|x+\Theta y|\leqslant |x|+|y|< 3|x|/2 \ \ \mbox{and}\ \ |x+\Theta y|\geqslant \big| |x|-\Theta|y| \big|> |x|/2,
	\end{align*}
	for any $\Theta\in(0,1)$ and $|y|<|x|/2$.
	As a consequence, we find
	\begin{equation}
		\label{eq:reminder}
		\begin{aligned}
			&\left|\sum\limits_{|\bar{\alpha}|=2}\frac{|\bar{\alpha}|}{\bar{\alpha}!}\int_{|y|<|x|/2}\frac{y^{\bar{\alpha}}}{|y|^{n+2\{\bar{\sigma}\}}}\int_0^1(1-\Theta)\,\partial^{\bar{\alpha}}\Psi_{\ell}(t,x+\Theta y)\,\mathrm{d}\Theta\,\mathrm{d}y  \right|\\
			&\lesssim\sum\limits_{|\bar{\alpha}|=2}\sum\limits_{j=0}^2\frac{2\widetilde{B}_{j\ell}(\bar{\alpha})}{\bar{\alpha}!}\int_{|y|<|x|/2}|y|^{-n-2\{\bar{\sigma}\}+2}\\
			&\quad\qquad\times\int_0^1(1-\Theta)\left(1+t^{2\alpha_2}+|x+\Theta y|^{2\beta_2}\right)^{-r_0-\ell-j}|x+\Theta y|^{2(\ell+j)\beta_2-2[\bar{\sigma}]-2}\,\mathrm{d}\Theta\,\mathrm{d}y\\
			&\lesssim\sum\limits_{j=0}^2\widetilde{B}_{j\ell}(\bar\alpha)\left(1+t^{2\alpha_2}+|x|^{2\beta_2}\right)^{-r_0-\ell-j}|x|^{2(\ell+j)\beta_2-2[\bar{\sigma}]-2}\int_{|y|<|x|/2}|y|^{-n-2\{\bar{\sigma}\}+2}\,\mathrm{d}y\\
			&\lesssim \Psi(t,x)\sum\limits_{j=0}^2\frac{|x|^{2(\ell+j)\beta_2-2\bar{\sigma}}}{(1+t^{2\alpha_2}+|x|^{2\beta_2})^{\ell+j}}\lesssim K(t,x)
		\end{aligned}
	\end{equation}
	uniformly in $t\in [0,1]$ and $x\in \mb R^n$ thanks to $\beta_2\geqslant [\bar{\sigma}]+2\geqslant \bar{\sigma}$ and $\{\bar{\sigma}\}\in(0,1)$.\\ Finally, summarizing all the estimates obtained one derives
	\begin{align}
		\label{eq:useful}
		|(-\Delta)^{\{\bar{\sigma}\}}\Psi_{\ell}(t,x)|\lesssim K(t,x),
	\end{align}
	for any $\ell\in\{1,\dots,2[\bar{\sigma}]\}$. According to the representation \eqref{Rep-Integer-Part}, we obtain
	\begin{align}\label{Est-k}
		|(-\Delta)^{\bar{\sigma}}\Psi(t,x)|\lesssim\sum\limits_{k=1}^{2[\bar{\sigma}]}|(-\Delta)^{\{\bar{\sigma}\}}\Psi_k(t,x)|\lesssim \left(1+t^{2\alpha_2}+|x|^{2\beta_2}\right)^{-\frac{n+2\{\bar\sigma\}}{2\beta_2}}
	\end{align}
	for any $x\in\mb{R}^n$ and any $t\in[0,1]$, which is exactly the desired estimate \eqref{Fra-02} when $j=0$ and $\bar{\sigma}\in [1,+\infty)\backslash \mb{N}_+$. Analogously, the estimate \eqref{Fra-02} with $j=1$ when  $\bar{\sigma}\in [1,+\infty)\backslash \mb{N}_+$ can be proved.
	
	Let us prove now the desired result for $\bar{\sigma}\in(0,1)$. We will first discuss the case $|x|>1$, and then the case $|x|<1$. The representation \eqref{Rep-Psi-ell} with $\bar{r}=|x|/2$ can be employed again for $(-\Delta)^{\bar{\sigma}}\Psi(t,x)$ when $|x|>1$, giving the following identity:
	\begin{align*}
		(-\Delta)^{\bar{\sigma}}\Psi(t,x)&=2C_{2\bar{\sigma}}\int_{|y|>|x|/2}\frac{\Psi(t,x)-\Psi(t,x+y)}{|y|^{n+2\bar{\sigma}}}\,\mathrm{d}y\\
		&\quad-2C_{2\bar{\sigma}}\sum\limits_{|\bar{\alpha}|=2}\frac{|\bar{\alpha}|}{\bar{\alpha}!}\int_{|y|<|x|/2}\frac{y^{\bar{\alpha}}}{|y|^{n+2\bar{\sigma}}}\int_0^1(1-\Theta)\,\partial^{\bar{\alpha}}\Psi(t,x+\Theta y)\,\mathrm{d}\Theta\,\mathrm{d}y.
	\end{align*}
	On the one hand, due to $|x|>1$ one notices
	\begin{align*}
		\int_{|y|>|x|/2}\frac{\Psi(t,x)}{|y|^{n+2\bar{\sigma}}}\,\mathrm{d}y\lesssim\Psi(t,x)|x|^{-2\bar{\sigma}}\lesssim \left(1+t^{2\alpha_2}+|x|^{2\beta_2}\right)^{-r_0-\frac{\bar\sigma}{\beta_2}}\lesssim K(t,x).
	\end{align*}
	On the other hand, recalling the definitions of $\Omega_1$ and $\Omega_2$, we can employ the same approach used to treat $\Psi_{\ell}(t,x+y)$ (especially \eqref{Triangle-01} for $y\in\Omega_2$) to estimate
	\begin{align*}
		\int_{|y|>|x|/2}\frac{\Psi(t,x+y)}{|y|^{n+2\bar{\sigma}}}\,\mathrm{d}y&=\left(\int_{\Omega_1}+\int_{\Omega_2}\right)\frac{\Psi(t,x+y)}{|y|^{n+2\bar{\sigma}}}\,\mathrm{d}y\\
		&\lesssim|x|^{-n-2\bar{\sigma}}\int_0^{+\infty}\left(1+|z|^{2\beta_2}\right)^{-r_0}|z|^{n-1}\,\mathrm{d}|z|+\Psi(t,x)\int_{2|x|}^{+\infty}|y|^{-2\bar{\sigma}-1}\,\mathrm{d}|y|\\
		&\lesssim |x|^{-n-2\bar{\sigma}}+\Psi(t,x)|x|^{-2\bar{\sigma}}\lesssim K(t,x),
	\end{align*}
	uniformly in $t\in [0,1]$ and $x\in \mb R^n$ for $|x|>1$,  thanks to $2\beta_2r_0>n$. Moreover, we know that for any $|\bar{\alpha}|=2$ it holds 
	\begin{align*}
		\partial^{\bar{\alpha}}\Psi(t,x)=\sum\limits_{j=1}^2\widetilde{D}_{j}(\bar{\alpha})\left(1+t^{2\alpha_2}+|x|^{2\beta_2}\right)^{-r_0-j}|x|^{2j\beta_2-2},
	\end{align*}
	for suitable constants $\widetilde{D}_{j}(\bar{\alpha})$. Thus, proceeding as in \eqref{eq:reminder} one can prove
	\begin{align*}
		\left|\sum\limits_{|\bar{\alpha}|=2}\frac{|\bar{\alpha}|}{\bar{\alpha}!}\int_{|y|<|x|/2}\frac{y^{\bar{\alpha}}}{|y|^{n+2\bar{\sigma}}}\int_0^1(1-\Theta)\,\partial^{\bar{\alpha}}\Psi(t,x+\Theta y)\,\mathrm{d}\Theta\,\mathrm{d}y \right|\lesssim \Psi(t,x),
	\end{align*}
	when $|x|>1$, being $\bar{\sigma}\in(0,1)$. \\
	Summarizing the previous estimates we obtain that \eqref{Fra-02} holds true for any $|x|>1$ and $t\in[0,1]$, when $j=0$ and $\bar\sigma\in (0,1)$. 
	
	In order to prove \eqref{Fra-02} for $|x|<1$ we apply Taylor's formula: for any fixed $|x|<1$ there exists $\xi_x\in \mb R^n$ with $|\xi_x|<|x|$ such that  
	\begin{equation}
		\label{eq:Taylor_|x|<1}
		(-\Delta)^{\bar\sigma} \Psi(t,x)=(-\Delta)^{\bar\sigma} \Psi(t,0)+ \nabla_x (-\Delta)^{\bar\sigma} \Psi(t,\xi_x)\cdot x.
	\end{equation}
	Being $\beta\geqslant 2$ we can easily deduce that $\Psi\in \ml{C}^2([0,+\infty), \ml{C}^2(\mb R^n))$; moreover, it holds
	\begin{align*}
		\nabla_x (-\Delta)^{\bar\sigma}  \Psi(t,\xi_x)\cdot x = \sum_{i=1}^n  \big((-\Delta)^{\bar\sigma}  \partial_{x_j}\Psi\big)(t,\xi_x) x_j.
	\end{align*}
	Applying the same procedure used to prove estimate \eqref{eq:useful} one can derive
	\begin{align*}
		\left|\big((-\Delta)^{\bar\sigma} \partial_{x_j}\Psi\big)(t,\xi_x) x_j\right|\lesssim   K(t,x)|x|\lesssim 1,
	\end{align*}
	for any $|x|<1$ and $t\in [0,1]$. Thus, from the identity \eqref{eq:Taylor_|x|<1} we obtain 
	\begin{equation}
		\label{eq:laplace_phi_|x|<1}
		|(-\Delta)^{\bar\sigma}  \Psi(t,x)|\lesssim 1,
	\end{equation}
	for any $|x|<1$ and any $t\in[0,1]$, which completes the proof of \eqref{Fra-02} with $j=0$ when $[\bar{\sigma}]=0$.  Similarly, the estimate \eqref{Fra-02} with $j=1$ when $[\bar{\sigma}]=0$ can be proved.
\end{proof}

\subsection{Proofs of Theorems \ref{Thm-Blow-up} and \ref{Thm-Upper-Bound}}
\hspace{5mm}Let us consider
\begin{align*}
	\varphi(t,x):=\left(1+t^{2\theta}+|x|^{2\theta\kappa}\right)^{-\frac{q_0}{2\theta\kappa}},
\end{align*}
with $\theta\geqslant\max\{1,([\sigma]+1)/\kappa \}$, and $\rho\in\ml{C}^2([0,+\infty))$ such that
\begin{align*}
	\rho(t):=\begin{cases}
		1&\mbox{if}\ \ t\in[0,1/2],\\
		\mbox{non-increasing}&\mbox{if}\ \ t\in[1/2,1],\\
		0&\mbox{if}\ \ t\in[1,+\infty),
	\end{cases}
\end{align*}
where $q_0=n+2s_0$ with $s_0$ defined by \eqref{eq:s0}. 

Applying Lemma \ref{Lem-Varphi-Fractional-Derivatives} with $\alpha_2=\theta$, $\beta_2=\theta \kappa$ and $r_0=q_0/(2\theta\kappa)$ it is easy to check that $\varphi$ belongs to $\mathfrak{X}_{q_0}([0,T)\times\mb{R}^n)$ according to Definition \ref{Defn-test-space}. 
In particular, due to our choice of $r_0$ we may estimate
\begin{align}
	|\partial_t^2\varphi(t,x)|, \; |(-\Delta)^{\sigma} \varphi(t,x)|,\; |(-\Delta)^{\delta}\partial_t\varphi(t,x)| \lesssim  \varphi(t,x), \label{eq:phi_estimate}
\end{align}
uniformly with respect to $(t,x)\in [0,1]\times \mb R^n$. \\
Then, we introduce $\psi(t,x):=[\rho(t)]^{r_2}\varphi(t,x)$ with $r_2\geqslant 2p'_{\mathrm{c}}(n)$; moreover, for any $R>0$ we define
\begin{align*}
	\psi_R(t,x):=\psi\left(R^{-1}t,R^{-\frac{1}{\kappa}}x\right).
\end{align*}
Assume by contradiction that $u\in L_{\mathrm{loc}}^{p_{\mathrm{c}}(n)}\left([0,+\infty), L^{p_{\mathrm{c}}(n)}(\mb{R}^n;\langle x\rangle^{-q_0}\,\mathrm{d}x)\right)$ is a global in time weak solution to the semilinear Cauchy problem \eqref{Sigma-Evolution-Modulus} in the sense of Definition \ref{Defn-Weak-Solution}. Thus,  for any $R\geqslant 1$ we can consider the following integral term:
\begin{align*}
	I_R:=\int_0^{+\infty}\int_{\mb{R}^n}g\big(u(t,x)\big)\psi_R(t,x)\,\mathrm{d}x\,\mathrm{d}t,
\end{align*}
where $g(u)=|u|^{p_{\mathrm{c}}(n)}\mu(|u|)$; according to \eqref{Equality-Weak-Solution} the integral term $I_R$ satisfies the following identity:
\begin{align*}
	I_R+C_{u_0,u_1}^{(R)}\varepsilon=\int_0^{+\infty}\int_{\mb{R}^n}u(t,x)\left(\partial_t^2+(-\Delta)^{\sigma}-(-\Delta)^{\delta}\partial_t\right)\psi_R(t,x)\,\mathrm{d}x\,\mathrm{d}t;
\end{align*}
here,  the constant $C^{(R)}_{u_0,u_1}$, depending on $u_0,u_1$ and $R$, is defined by:
\begin{align*}
	C_{u_0,u_1}^{(R)}:=\int_{\mb{R}^n}\left[u_1(x)\,\psi_R(0,x)-u_0(x)\left(\partial_t\psi_R(0,x)-(-\Delta)^{\delta}\psi_R(0,x)\right) \right]\mathrm{d}x.
\end{align*}
Employing \eqref{eq:phi_estimate} we may estimate
\begin{align*}
	|(-\Delta)^{\sigma}\psi_R(t,x)|&=R^{-\frac{2\sigma}{\kappa}}[\rho(R^{-1}t)]^{r_2}\left|\big((-\Delta)^{\sigma}\varphi\big)\left(R^{-1}t,R^{-\frac{1}{\kappa}}x\right)\right|\\
	&\lesssim R^{-\frac{2\sigma}{\kappa}}[\rho(R^{-1}t)]^{r_2}\varphi_R(t,x).
\end{align*}
Due to the fact that
\begin{align*}
	\partial_t\psi(t,x)&=r_2[\rho(t)]^{r_2-1}\rho'(t)\,\varphi(t,x)+[\rho(t)]^{r_2}\partial_t\varphi(t,x),
\end{align*}
with the aid of $\rho\in\ml{C}^2_0([0,+\infty))$ and the estimate \eqref{eq:phi_estimate} we may derive
\begin{align*}
	|(-\Delta)^{\delta}\partial_t\psi_R(t,x)|&=R^{-1-\frac{2\delta}{\kappa}}\left|\big((-\Delta)^{\delta}\partial_t\psi\big)\left(R^{-1}t,R^{-\frac{1}{\kappa}}x\right)\right|\\
	&\lesssim R^{-1-\frac{2\delta}{\kappa}}[\rho(R^{-1}t)]^{r_2-1}\varphi_R(t,x),
\end{align*}
for any $\delta\in [0,\sigma]$. Similarly, it holds
\begin{align*}
	|\partial_t^2\psi_R(t,x)|\lesssim R^{-2}\left|\partial^2_t\psi\left(R^{-1}t,R^{-\frac{1}{\kappa}}x\right)\right|\lesssim R^{-2}[\rho(R^{-1}t)]^{r_2-2}\varphi_R(t,x).
\end{align*}
Summarizing the last derived estimates, we conclude immediately
\begin{equation}
	\begin{aligned}
		\label{eq:main_IR}
		I_R+C_{u_0,u_1}^{(R)}\varepsilon  \lesssim R^{-\min\left\{\frac{2\sigma}{\kappa},1+\frac{2\delta}{\kappa},2 \right\}}\int_0^{+\infty}\int_{\mb{R}^n}|u(t,x)|\,[\rho(R^{-1}t)]^{r_2-2}\varphi_R(t,x)\,\mathrm{d}x\,\mathrm{d}t;
	\end{aligned}
\end{equation}
in particular, we notice that $\min\{2\sigma/\kappa,1+2\delta/\kappa,2 \}=2\sigma/\kappa$ for all $\delta\in [0,\sigma]$.\\
In order to derive sharp upper bound estimates of the lifespan, we introduce a new auxiliary function
\begin{align*}
	\Phi(t,x):=\frac{(t^{2\theta}+|x|^{2\theta\kappa})^{\beta_0}}{(1+t^{2\theta}+|x|^{2\theta\kappa})^{\beta_1}},
\end{align*}
with $0<\beta_0<\beta_1$ such that
\begin{align}\label{beta-0-1}
	\beta_0<\frac{p_{\mathrm{c}}(n)-1}{2\theta}\ \ \mbox{and}\ \ \beta_1<\frac{s_0(p_{\mathrm{c}
		}(n)-1)}{\theta\kappa}.
\end{align}
Moreover, we define
\begin{align*}
	\Phi_R(t,x):=\Phi\left(R^{-1}t,R^{-\frac{1}{\kappa}}x\right).
\end{align*}
Thanks to the convexity of $g$ in the assumption $(A_3)$, we can apply the generalized Jensen's inequality, i.e. Lemma \ref{Lem-Jensen-Ineq}, with the non-negative functions
\begin{align*}
	v(t,x)&:=|u(t,x)|\,[\rho(R^{-1}t)]^{\frac{2}{p_{\mathrm{c}}(n)-1}}[\Phi_R(t,x)]^{\frac{1}{p_{\mathrm{c}}(n)-1}},\\
	\alpha(t,x)&:=[\rho(R^{-1}t)]^{r_2-2p'_{\mathrm{c}}(n)}[\Phi_R(t,x)]^{-\frac{1}{p_{\mathrm{c}}(n)-1}}\varphi_R(t,x),
\end{align*}
to arrive at
\begin{align}\label{G-Jensen}
	&g\left(\frac{\int_0^{+\infty}\int_{\mb{R}^n}|u(t,x)|\,[\rho(R^{-1}t)]^{r_2-2}\varphi_R(t,x)\,\mathrm{d}x\,\mathrm{d}t}{\int_0^{+\infty}\int_{\mb{R}^n}\alpha(t,x)\,\mathrm{d}x\,\mathrm{d}t} \right)\leqslant\frac{\int_0^{+\infty}\int_{\mb{R}^n}g(v(t,x))\,\alpha(t,x)\,\mathrm{d}x\,\mathrm{d}t}{\int_0^{+\infty}\int_{\mb{R}^n}\alpha(t,x)\,\mathrm{d}x\,\mathrm{d}t}.
\end{align}
Recalling the definition of the nonlinearity $g$, one may estimate
\begin{align*}
	g\big(v(t,x)\big)\,\alpha(t,x)
	&=|u(t,x)|^{p_{\mathrm{c}}(n)}\mu\left(|u(t,x)|\,[\rho(R^{-1}t)]^{\frac{2}{p_{\mathrm{c}}(n)-1}}[\Phi_R(t,x)]^{\frac{1}{p_{\mathrm{c}}(n)-1}}\right)\Phi_R(t,x)\,\psi_R(t,x)\\
	&\leqslant g\big(u(t,x)\big)\,\Phi_R(t,x)\,\psi_R(t,x),
\end{align*}
where we used the monotone increasing property of $\mu$ combined with the boundedness of $0\leqslant \rho(t),\Phi_R(t,x)\leqslant 1$. Moreover, applying the changes of variables $t\mapsto R\,t$ and $x\mapsto R^{\frac{1}{\kappa}}x$, we deduce
\begin{align*}
	\int_0^{+\infty}\int_{\mb{R}^n}\alpha(t,x)\,\mathrm{d}x\,\mathrm{d}t&=\int_0^{+\infty}\int_{\mb{R}^n}[\rho(R^{-1}t)]^{r_2-2p'_{\mathrm{c}}(n)}[\Phi_R(t,x)]^{-\frac{1}{p_{\mathrm{c}}(n)-1}}\varphi_R(t,x)\,\mathrm{d}x\,\mathrm{d}t\\
	&=\int_0^{+\infty}\int_{\mb{R}^n}[\rho(t)]^{r_2-2p'_{\mathrm{c}}(n)}[\Phi(t,x)]^{-\frac{1}{p_{\mathrm{c}}(n)-1}}\varphi(t,x)\,\mathrm{d}x\,\mathrm{d}t\,R^{1+\frac{n}{\kappa}}\\
	&=:C_{\mathrm{int}}\,R^{1+\frac{n}{\kappa}},
\end{align*}
where $C_{\mathrm{int}}$ is a positive and bounded constant; indeed, being $r_2\geqslant 2 p'_c(n)$ and $0\leqslant \rho, \varphi \leqslant 1$ we have
\begin{align*}
	0<C_{\mathrm{int}}&\leqslant\int_0^1\int_{\mb{R}^n}\frac{(t^{2\theta}+|x|^{2\theta\kappa})^{-\frac{\beta_0}{p_{\mathrm{c}}(n)-1}}}{(1+t^{2\theta}+|x|^{2\theta\kappa})^{-\frac{\beta_1}{p_{\mathrm{c}}(n)-1}+\frac{q_0}{2\theta\kappa}}}\,\mathrm{d}x\,\mathrm{d}t\\
	&\lesssim\left(\int_0^1t^{-\frac{2\theta\beta_0}{p_{\mathrm{c}}(n)-1}}\mathrm{d}t\right)\left(\int_{\mb{R}^n}(1+|x|^{2\theta\kappa})^{\frac{\beta_1}{p_{\mathrm{c}}(n)-1}-\frac{q_0}{2\theta\kappa}}\mathrm{d}x\right)\lesssim 1,
\end{align*}
thanks to $r_2\geqslant 2p'_{\mathrm{c}}(n)$ and the restrictions on $\beta_0$, $\beta_1$ in \eqref{beta-0-1}. From the monotone increasing property of $g$, \eqref{G-Jensen} implies that
\begin{align*}
	&\int_0^{+\infty}\int_{\mb{R}^n}|u(t,x)|\,[\rho(R^{-1}t)]^{r_2-2}\varphi_R(t,x)\,\mathrm{d}x\,\mathrm{d}t\\
	&\qquad\lesssim R^{1+\frac{n}{\kappa}}g^{-1}\left(C_{\mathrm{int}}^{-1}R^{-1-\frac{n}{\kappa}}\int_0^{+\infty}\int_{\mb{R}^n}g\big(u(t,x)\big)\,\Phi_R(t,x)\,\psi_R(t,x)\,\mathrm{d}x\,\mathrm{d}t\right).
\end{align*}
Thus, by \eqref{eq:main_IR} the following estimate follows:
\begin{align*}
	&I_R+C_{u_0,u_1}^{(R)}\varepsilon\lesssim R^{1+\frac{n-2\sigma}{\kappa}}g^{-1}\left(C_{\mathrm{int}}^{-1}R^{-1-\frac{n}{\kappa}}\int_0^{+\infty}\int_{\mb{R}^n}g\big(u(t,x)\big)\,\Phi_R(t,x)\,\psi_R(t,x)\,\mathrm{d}x\,\mathrm{d}t\right). 
\end{align*}

Let us introduce an auxiliary functional
\begin{align*}
	Y(R):=\int_0^Ry(r)\,r^{-1}\,\mathrm{d}r\ \ \mbox{with}\ \ y(r):=\int_0^{+\infty}\int_{\mb{R}^n}g\big(u(t,x)\big)\,\Phi_r(t,x)\,\psi_r(t,x)\,\mathrm{d}x\,\mathrm{d}t.
\end{align*}
By the identity $y(R)=Y'(R)R$, we may rewrite the last inequality by
\begin{equation}
	\label{eq:IR leqY'(R)}
	I_R+C_{u_0,u_1}^{(R)}\varepsilon\lesssim R^{1+\frac{n-2\sigma}{\kappa}}g^{-1}\left(C_{\mathrm{int}}^{-1}R^{-\frac{n}{\kappa}}Y'(R)\right).
\end{equation}
Moreover, due to $\psi_r\leqslant \psi_R$ for any $r\in(0,R]$, we estimate
\begin{equation}
	\label{eq:Y(R) leq IR}
	\begin{aligned}
		Y(R)&\lesssim\int_0^{+\infty}\int_{\mb{R}^n}g\big(u(t,x)\big)\,\psi_R(t,x)\,\left(\int_0^R\Phi_r(t,x)\,r^{-1}\,\mathrm{d}r\right)\mathrm{d}x\,\mathrm{d}t\\
		&\lesssim\int_0^{+\infty}\int_{\mb{R}^n}g\big(u(t,x)\big)\,\psi_R(t,x)\,\mathrm{d}x\,\mathrm{d}t=I_R.
	\end{aligned}
\end{equation}
Indeed, employing the change of variable $\tau=r^{-2\theta}(t^{2\theta}+|x|^{2\theta\kappa})$ one can find
\begin{align*}
	\int_0^R\Phi_r(t,x)\,r^{-1}\,\mathrm{d}r=\frac{1}{2\theta}\int_{\frac{t^{2\theta}+|x|^{2\theta\kappa}}{R^{2\theta}}}^{+\infty}\frac{\tau^{\beta_0-1}}{(1+\tau)^{\beta_1}}\,\mathrm{d}\tau\lesssim \int_0^{+\infty}\frac{\tau^{\beta_0-1}}{(1+\tau)^{\beta_1}}\,\mathrm{d}\tau,
\end{align*}
which is uniformly bounded with respect to $R$, being $0<\beta_0<\beta_1$. In conclusion, by \eqref{eq:IR leqY'(R)} and \eqref{eq:Y(R) leq IR} we obtain
\begin{align}\label{Crucial-05bar}
	Y'(R)\gtrsim R^{\frac{n}{\kappa
	}}g\left[R^{-\frac{n-\min\{2\delta,\sigma\}}{\kappa}}\big(C_5 Y(R)+C_{u_0,u_1}^{(R)}\varepsilon\big)\right]
\end{align}
for any $R\geqslant 1$ with a suitable constant $C_5>0$ independent of $R$ and $\varepsilon$.

In particular, since $\rho'(0)=0$ and $\partial_t\varphi(0,x)=0$ due to $\theta\geqslant 1$, we can write 
\begin{align*}
	C_{u_0,u_1}^{(R)}=\int_{\mb{R}^n}\left[u_1(x)\,\psi\left(0,R^{-\frac{1}{\kappa}}x\right)+R^{-\frac{2\delta}{\kappa}}u_0(x)\,(-\Delta)^{\delta}\psi\left(0,R^{-\frac{1}{\kappa}}x\right) \right]\mathrm{d}x;
\end{align*}
if $\delta=0$, being $\psi\big(0, R^{-\frac{1}{\kappa}}x\big)\geqslant \langle x \rangle^{-q_0}$ for any $R\geqslant 1$, we may estimate
\begin{equation}
	\label{eq:Cu0u1_delta=0}
	C_{u_0,u_1}^{(R)}\geqslant \int_{\mb R^n} \big(u_0(x)+u_1(x)\big)\langle x\rangle^{-q_0}\,\mathrm{d}x =:C_{u_0,u_1}>0
\end{equation}
uniformly for any $R\geqslant 1$ thanks to the sign assumption \eqref{eq:data_sign_delta=0}. \\ On the other hand, if $\delta>0$ we may estimate
\begin{align*}
	C_{u_0,u_1}^{(R)}&\geqslant \int_{\mb{R}^n}\left(u_1(x)-c_\delta R^{-\frac{2\delta}{\kappa}}|u_0(x)|\right)\psi\left(0,R^{-\frac{1}{\kappa}}x\right) \mathrm{d}x,
\end{align*}
where $c_\delta$ is a positive constant such that $|(-\Delta)^\delta \psi| \leqslant c_\delta \psi$.\\
We notice that, if assumption \eqref{eq:data_sign_delta>0_1} holds, since $u_0\in L^1(\mb R^n)$ and $\psi\leqslant 1$ the application of the dominated convergence theorem allows to get:
\[ \lim_{R\to +\infty} R^{-\frac{2\delta}{\kappa}}\int_{\mb R^n} |u_0(x)|\psi(0, R^{-\frac{1}{\kappa}})\, \mathrm{d}x=0;\]
in particular, there exists $R_\delta>0$ such that 
\[  c_\delta R^{-\frac{2\delta}{\kappa}}\int_{\mb R^n} |u_0(x)|\psi(0, R^{-\frac{1}{\kappa}})\, \mathrm{d}x<\frac{1}{2} \int_{\mb{R}^n}u_1(x)\langle x \rangle^{-q_0}\, \mathrm{d}x,\]
for any $R\geqslant R_\delta$. Then, $C_{u_0,u_1}^{(R)}$ satisfies
\begin{align}
	\label{eq:Cu0u1_delta>0}
	C_{u_0,u_1}^{(R)}\geqslant\frac{1}{2} \int_{\mb{R}^n}u_1(x) \langle x \rangle^{-q_0}\, \mathrm{d}x=:C_{u_0,u_1}>0,
\end{align}
uniformly with respect to $R\geqslant R_\delta$.  
Similarly, if the initial data satisfy the assumption \eqref{eq:data_sign_delta>0_2} then there exists $\bar c>0$ such that $u_1(x)\geqslant 2\bar{c}|u_0(x)|$ for any $x\in \mb R^n$; in this case, we may fix $R_\delta>0$ sufficiently large such that $c_\delta R^{-\frac{2\delta}{\kappa}}\leqslant \bar{c}$ for any $R\geqslant R_\delta$. Thus, the inequality \eqref{eq:Cu0u1_delta>0} still holds true for any $R\geqslant R_\delta$.\\
Theorem \ref{Thm-Lower-Bound} provides a lower bound $T_{\varepsilon,\ell}$ for $T_\varepsilon$ which allows us to deduce that $T_\varepsilon\to +\infty$ as $\varepsilon\to 0^+$; thus, taking $\bar\varepsilon>0$ sufficiently small one obtains $T_{\varepsilon}>\max\{R_\delta,1\}$ for any $\varepsilon\in (0, \bar\varepsilon)$.
Then, we can claim that the inequality \eqref{Crucial-05bar} holds for any $R\in [R_\delta, T_\varepsilon)$ with $C^{(R)}_{u_0,u_1}$ satisfying \eqref{eq:Cu0u1_delta=0} if $\delta=0$, and \eqref{eq:Cu0u1_delta>0} if $\delta>0$.
In particular, \eqref{Crucial-05bar} implies \eqref{Crucial-05} with $C_{u_0,u_1}$ defined by \eqref{eq:Cu0u1_delta=0} if $\delta=0$, and by \eqref{eq:Cu0u1_delta>0} if $\delta>0$. Then, the strategy explained in Section \ref{Section-Blow-up_1}\,{(a)} allows to conclude the proof of Theorem \ref{Thm-Blow-up} if $T_\varepsilon=+\infty$; on the other hand, if $T_\varepsilon<+\infty$ the approach described in Section \ref{Section-Blow-up_1}\,{(b)} allows to get the desired upper bound \eqref{Upper-Bound-Lifespan} for the lifespan of  solution, provided that $\varepsilon \in (0, \bar\varepsilon)$. The proof of Theorem \ref{Thm-Upper-Bound} follows.

\section*{Acknowledgments}
Wenhui Chen is supported in part by the National Natural Science Foundation of China (grant No. 12301270, grant No. 12171317), Guangdong Basic and Applied Basic Research Foundation (grant No. 2023A1515012044), 2024 Basic and Applied Basic Research Topic--Young Doctor Set Sail Project (grant No. 2024A04J0016). Giovanni Girardi is supported in part by the INdAM-GNAMPA Project (grant No. CUP E53C23001670001).

\end{document}